\newtheorem{theorem}{Theorem}[section]
\newtheorem{lemma}[theorem]{Lemma}
\newtheorem{defn}[theorem]{Definition}
\newtheorem{definition}[theorem]{Definition}
\newtheorem{corollary}[theorem]{Corollary}
\newtheorem{claim}[theorem]{Claim}
\newtheorem{remark}[theorem]{Remark}
\newtheorem{conjecture}[theorem]{Conjecture}
\newtheorem{proposition}[theorem]{Proposition}
\newtheorem{counterex}[theorem]{Counterexample}
\DeclareMathOperator{\vd}{vdiam}
\DeclareMathOperator{\mult}{mult}
\DeclareMathOperator{\add}{add}
\DeclareMathOperator{\tl}{tl}
\def\qed{\hfill
\ifhmode\unskip\nobreak\fi\quad\ifmmode\Box\else$\Box$\fi\\ }
\begin{document}
\title{Negatively Curved Graphs}
\author{Matthew P. Yancey $^*$}

\maketitle

\begin{footnotesize}
\noindent $^*$ Institute for Defense Analyses - Center for Computing Sciences, Bowie MD, USA; mpyancey1@gmail.com

\end{footnotesize}

\begin{abstract}
Many applications in network science have recently been discovered for the ``curvature'' of a network, but there is no consensus on the definition for this term.
A common approach in these applications is to derive from the curvature either a ``logical center'' of the network or a tree representation of the network (these would only exist when the curvature is negative), but that such structures can be extracted using curvature alone remains largely conjectural.
A connection between one type of curvature---Gromov's hyperbolicity---and a tree representation has been known for decades, and recently it has also been connected for unweighted graphs to a logical center.
We extend the connection between Gromov's hyperbolicity and a logical center to weighted graphs, and we construct counterexamples showing that no other proposed definition for curvature implies the existence of a logical center.
We also consider the leading methods to construct a tree representation of the network and the leading methods to measure the quality of the representation, and show that, despite wildly different descriptions, they are asymptotically equivalent.

These results resolve several conjectures, including a conjecture of Dourisboure and Gavoille on a $2$-approximation method for calculating tree-length and all of the conjectures from Jonckheere, Lou, Bonahon, and Baryshnikov relating congestion to rotational symmetry.
\end{abstract}

AMS Classification: 51K99, 05C99

\section{Motivation}
There has been an attempt recently to describe routing in computer networks by modeling the traffic as a set of shortest paths in non-Euclidean space.
To see how this comparison has deep qualitative significance, consider the model using the Poincar\'{e} disk.
The metric on the Poincar\'{e} Disk is smaller (``cheaper'') between points closer to the origin, and so all shortest paths arc towards the center as they travel.
In computer networks, bits of information that need to travel far will be passed from regional hubs up to backbone routers, which is a subnetwork designed to route a large volume of traffic cheaply across long distances.
Each successive stage up is then considered closer to the ``logical center'' of the network.
But in what tangible ways is this metaphor useful?

Shavitt and Tankel \cite{ST}, and later Bogu\~{n}\'{a}, Papadopoulos, and Krioukov \cite{BPK}, described a heuristic method to map nodes of a computer network into a $2$-dimensional hyperbolic space.
They started with placing vertices with highest degree (a parameter known to correlate with being in the backbone) at the center of the hyperbolic space, and then moved on to lower degree vertices, placing them at larger radii from the center.
The goal of the project was a mapping of telecommunication companies to a hyperbolic space that could be used to efficiently model Internet traffic.
The experiment was a success.
Another example of using a curved space to solve a problem is the scheme by Gavoille and Ly \cite{GL} for distance-labeling networks.

After the appearance of these applications various types of curved spaces were presented as alternative methods to model big data or big networks.
Most of these spaces have a parameter, usually called the curvature of the space.
Proposed uses of curvature include community detection \cite{EM}, network security \cite{J,JL}, quantum information \cite{JLS,JSL}, influence in regulatory networks \cite{ADM,BCC}, graph mining \cite{AD}, connectivity of sensor networks \cite{ALJKZ}, and network modeling \cite{KPKVB,CF}.
These projects use discretizations of negative curvature; we investigate discretizations of positive curvature in a companion paper \cite{Y_pos}.

We model all networks as graphs with a vertex set $V$ and edges $E$ with edge weights $w:E \rightarrow \mathbb{R}_{\geq 0}$.
The length of a path is the sum of the weights on its edges,\footnote{Weights on edges have been used to symbolize many different types of quantities.  Here we use it to represent length.  This is the opposite of using weights to represent similarity, which is also common.} and the distance between two points is the length of the shortest path between them.
Edge weighted graphs are sufficiently general to model all finite metric spaces.
A metric space is a \emph{tree metric} if it can be represented using graph distance where the underlying graph is a tree.
Tree metrics are also known as additive metrics.

Most of the above projects use curvature to either find a ``center'' of the network or establish a tree metric over the nodes of the network that approximates the standard graph distance.
One motivation for locating the center is that it provides a manner in which the algorithms of Shavitt and Tankel \cite{ST} and Bogu\~{n}\'{a}, Papadopoulos, and Krioukov \cite{BPK} to embed a network in hyperbolic space can be used on an arbitrary hyperbolic network without collateral information about a backbone.
This motivated links to be conjectured to exist between a well-defined ``center''  and each of hyperbolic spaces \cite{JLA,JLB}, curved $2$-manifolds \cite{NS}, and fixed points in the automorphism group of the graph \cite{JLBB}.
Finding a tree metric that approximates the structure of a given network is a topic that predates graph curvature by several decades and has its own applications to routing schemes \cite{CDEHVX,D}, evolutionary trees \cite{ABFPT,DHHKMW}, graph mining \cite{AD}, computational geometry \cite{BIS}, and graph visualization \cite{DG}.

A key thing the reader should notice in the above literature review is the breadth of ideas rather than the depth.
That is, there are many different approaches to tree metrics or analogues for curvature mentioned above, but none of these ideas have been refined through repeated study. 
The motivation of this paper is to simplify the disparate apply-curvature-to-discrete-spaces field through proving, strengthening, or disproving connections among some of the known algorithms, parameters, and results.
This is by no means a survey, and for brevity we only touch on ideas for which we have results.
Proving or disproving more connections between other parameters and results in the field would be interesting.

The results presented here are of two forms: (A) we unify into a single theory the different definitions for curvature that are rigorously connected to congestion, several of the different measurements for the quality of a tree metric approximation, and all of the corresponding best-known algorithms for constructing an approximate tree metric, and (B) we generalize the result of Chepoi, Dragan, and Vax\`{e}s \cite{CDV} about the existence of a ``center'' in a hyperbolic network from unweighted graphs (i.e. $w \equiv 1$) to general metric spaces (the changes also lead to a faster algorithm to find the center).

\subsection{Congestion}

We model traffic on a computer network using \emph{uniformly distributed demand}.
We assume all graphs are connected.
Let $P(u, v)$ denote the number of shortest paths from $u$ to $v$, and $P(u,v;S)$ denote the number of shortest paths from $u$ to $v$ that intersect the vertex set $S$.
The demand on $S$ is defined to be $\mathbb{D}(S) = \sum_{u,v} P(u,v;S)/P(u,v)$.
Demand is related to the popular parameter \emph{betweenness centrality} of vertex $w$, which can be calculated as $\mathbb{D}(\{w\})/{|V| \choose 2}$.
As $P(u,v,\{v\}) = P(u,v)$ for each $u$ we have that $\mathbb{D}(\{v\}) \geq |V|-1$ for each $v$.
In most real world networks the average distance between two vertices is small \cite{LKF}, and therefore the average amount of demand on an individual vertex is approximately $c|V|$ for some constant $c$.
A \emph{congested} network is then a network with a skewed distribution of demand, and intuitively a ``center'' of the network (if it exists) would be a small connected set $S$ with $\mathbb{D}(S)$ is $\Theta(|V|^2)$.

We use two formal definitions for the ``center.''
The first is used to identify a center in some fixed network, and the second is used to evaluate whether centers exist (i.e. is the network congested) in an asymptotic sense in an infinite family of networks.
The first definition comes from Chepoi, Dragan, and Vax\`{e}s \cite{CDV}.
The second definition is our own\footnote{The intuition and early definitions for curvature and congestion are based on infinite graphs, which become trivial when directly applied to finite graphs.  The motivation for new definitions stems from the applications to finite networks.  The definition we invented is inspired by the concept of ``finite yet unbounded.''}, and our best interpretation of the intuition generated in \cite{JLA,JLB,NS,JLBB}.

Let $d(x,y)$ denote the distance between vertices $x,y$.
For a vertex subset $S$ and a vertex $x$, we use the notation $d(x,S) = \min\{d(y,x):y \in S\}$.
The closed ball of radius $r$ around vertex subset $S \subseteq V$ is denoted $B_r(S) = \{y \in V: d(y,S) \leq r\}$.
Because we work with finite metric spaces, it is more natural to work with closed balls, and thus a \emph{ball} will refer to a closed ball.

\begin{definition}[Chepoi, Dragan, and Vax\`{e}s \cite{CDV}]
Fix a graph $G$ and a vertex subset $X \subseteq V$.
An $(\alpha,r)-$\emph{core} is a ball around a vertex $m$ of radius $r$ such that for the set $M = \{x,y \in X: P(x,y;B_r(m)) = P(x,y)\}$ we have $|M| \geq \alpha {|X| \choose 2}$.
We say that $m$ is the center of such a core.
\end{definition}

As a slight abuse of notation, we may refer to $m$ as the core instead of $B_r(m)$.
An $(\alpha,r)$-core may not exist, and it may not be unique if it does exist.
For simplicity, we assume through out this paper that $X=V$.

\begin{definition}
A family of graphs $\{G_i\}$ is \emph{congested} if there exists an $\epsilon > 0$ such that for each $i$, there exists a vertex $v_i \in V(G_i)$ such that $\mathbb{D}(\{v_i\}) > \epsilon |V(G_i)|^2$.
We say that $\{G_i\}$ is \emph{roughly congested} if there exists fixed $\epsilon > 0$ and $r$ such that for each $i$, there exists a vertex $v_i \in V(G_i)$ such that $\mathbb{D}(B_r(v_i)) > \epsilon |V(G_i)|^2$.
\end{definition}

If a family of graphs is roughly congested, then we describe $v_i$ as a center of the core in $G_i$.
Centers of cores may not be unique.

Gromov's $4$-point hyperbolicity of a graph $G$, denoted by $\delta(G)$, is the smallest $k$ such that  for any $4$ vertices $x,y,z,r$ we have that 
\begin{equation}\label{4 point condition}
d(x,z) + d(y,r) \leq \max\{d(x,y) + d(r,z), d(y,z) + d(x,r)\} + 2k.
\end{equation}
When there is no chance for confusion, we use $\delta$ to denote $\delta(G)$.
A central result in this field is that $\delta$-hyperbolicity implies the existence of cores.

\begin{theorem}[\cite{CDV}, Theorem 4.1 and Remark 3]\label{SODA full thm}
If $G$ is an unweighted $\delta$-hyperbolic graph, then there exists a $(\frac{1}{2}, 4 \delta)$-core.  
Moreover, finding the center of this core can be done in $O(|V|^2 |E|)$ time.
\end{theorem}

We unify the different definitions for curvature that are rigorously connected to congestion via a set of negative results.
That is, for each other definition of negative graph curvature---except for Gromov's hyperbolicity---we construct examples showing that bounding that form of curvature does not induce a family of graphs that is roughly congested.

Another result we prove is a variation on Theorem \ref{SODA full thm} for weighted graphs (as our result does not reduce to theirs when all edges are given weight $1$, the results are independent).
We use the notation $W = \max_{uv \in E}w(uv)$, and define $W_{/2} = \lfloor W/2 \rfloor$ when $w$ is integral-valued and $W/2$ otherwise.
By definition, $W_{/2} = 0$ if and only if $G$ is an unweighted graph.
Any generalization of Theorem \ref{SODA full thm} to weighted graphs must incorporate some function of $W$.
To see why, observe that a complete graph where every edge has weight $W \gg 1$ is $0$-hyperbolic but does not have an $(\alpha, r)$-core for $r < W$.

\begin{theorem}\label{full congestion}
Fix $0 < \alpha < 1/2$, and set $r(\alpha) = 2\delta(G) + 3W/2 + \frac{1-\alpha}{1-2\alpha}(4 \delta(G) + 2 W_{/2})$.
Every finite weighted graph $G$ contains an $(\alpha,r(\alpha))$-core.
Moreover, the center of such a core can be found in $O(|V||E|)$ time.
\end{theorem}

\subsection{Measuring Tree-likeness}

A beautiful aspect of mathematics is that particularly clever techniques have a way of showing up independently in multiple contexts, with those appearances only to be connected at some later date.
We consider three parameters that gauge how ``tree-like'' a graph $G$ is; they are \emph{additive distortion} $\gamma_{\add}(G)$, \emph{multiplicative distortion} $\gamma_{\mult}(G)$, and \emph{tree length} $\tl(G)$.
We will discuss algorithms that approximate these parameters, as it is NP-complete to compute tree-length \cite{L} or multiplicative distortion \cite{DY} and finding the argmin of additive distortion \cite{ABFPT} is NP-hard.

Two of the measures are based on \emph{distance approximating trees}.
Fix some graph $G$ and tree $\mathcal{T}(G)$.
A distance approximating tree is a map $T: V(G) \rightarrow V(\mathcal{T}(G))$ such that $|d_G(u,v) - d_{\mathcal{T}(G)}(T(u), T(v))|$ is small for all pairs of vertices $u,v \in V(G)$.
As an abuse of notation, we frequently will use $T$ to refer to both the map and the underlying tree of a distance approximating tree.
The additive distortion is defined by
$$ \gamma_{\add}(G) = \min_T \|G-T\|_\infty := \min_T \max_{u,v} \left|d_G(u,v) - d_T(T(u),T(v))\right|, $$
and the multiplicative distortion is defined by 
$$ \gamma_{\mult}(G) = \min_T \left\{ \max_{u,v}  \left\{ \frac{d_T(T(u),T(v)) }{d_G(u,v)} \right\} : d_T(T(x),T(y)) \geq d_G(x,y)\ \forall x,y\right\} .$$
The condition in the definition of multiplicative distortion that the tree be non-contracting is common (e.g., \cite{CDNRV}) but not universal (e.g., \cite{NR}), but it is folklore that by multiplying the weight of all edges by $\gamma_{\mult}^{-1/2}$ the two definitions become equivalent.

The third measure is based on tree decompositions.
For a tree $T$, let $S(T)$ be the set of all vertex subsets that correspond to a sub-tree of $T$.
A \emph{tree-decomposition} of a graph $G$ is a tree $T'$ and a map $\phi: V(G) \rightarrow S(T')$, such that if $xy \in E(G)$ then $\phi(x) \cap \phi(y) \neq \emptyset$.
The tree-length of a given decomposition is $\tl(T',\phi) = \max \{d(u,v) : \phi(u) \cap \phi(v) \neq \emptyset\}$, and $\tl(G) = \min_{(T, \phi)}\tl(T, \phi)$.
It has been established that these three measures are roughly equivalent for unweighted graphs.

\begin{theorem}[various sources---see Section \ref{History of tree-like algorithms subsec}]\label{all the same for unweighted}
There exists a universal constant $C$ such that if $G$ is an unweighted graph, then 
$$\max\{ \tl(G), \gamma_{\mult}(G), \gamma_{\add}(G)\} \leq C(1 + \min\{\tl(G), \gamma_{\mult}(G), \gamma_{\add}(G) \}).$$
\end{theorem}

The clever technique that has shown up independently in multiple contexts is not how to measure the ``tree-likeness'' of a graph, but how to construct a tree that resembles a given graph.
That is, Theorem \ref{all the same for unweighted} was established by discovering that a single algorithm to construct a distance approximating tree had been re-invented in the contexts of a constant factor approximation to each of $\tl(G), \gamma_{\mult}(G), \gamma_{\add}(G)$.

Our next result is an analogue of Theorem \ref{all the same for unweighted} for weighted graphs.
But first let us observe that a direct generalization is not possible, as the following standard modifications of edge weights affect the three parameters differently.
For any $uv \notin E$, adding edge $uv$ with weight $w(uv) > d(u,v)$ will not affect $\gamma_{\mult}(G)$ or $\gamma_{\add}(G)$ but it might affect $\tl(G)$.
Subdividing an edge might increase and will not decrease $\gamma_{\mult}(G)$ or $\gamma_{\add}(G)$ while it will not increase $\tl(G)$.\footnote{To see why $\tl(G)$ will not increase, suppose we subdivide edge $xy$, which creates vertex $z$.  By definition, an optimal tree decomposition of the original graph has $a \in \phi(x) \cap \phi(y)$.  Append leaf node $b$ to $a$ such that $\phi^{-1}(b) = \{x,y,z\}$.}
If we multiply every edge weight by a common factor, then $\tl(G)$ and $\gamma_{\add}(G)$ scale proportionally while $\gamma_{\mult}(G)$ is invariant.

While it takes a different form, Theorem \ref{connecting the parameters thm} does reduce to Theorem \ref{all the same for unweighted} on unweighted graphs.
Most surprisingly, it is proven in the same way: we establish a common framework between four of the current-best algorithms for constructing a tree-like representation of a graph.

We say that an edge $uv$ is \emph{relevant} if $w(uv) = d(u,v)$, otherwise it is \emph{irrelevant}.
In an unweighted graph, every edge is relevant.
If an edge is irrelevant, then deleting it will not affect the underlying metric.
The set of irrelevant edges can be determined in $O(|V|^3)$ time using the Floyd-Warshall algorithm.

\begin{theorem}\label{connecting the parameters thm}
There exists a universal constant $C$ such that any graph $G$ where every edge is relevant satisfies
$$\max\{ \tl(G), \gamma_{\mult}(G), \gamma_{\add}(G)\} \leq C( W + \min\{\tl(G), (1+W) \gamma_{\mult}(G), \gamma_{\add}(G) \}).$$
\end{theorem}

\subsection{Structure and Definitions}

In Section \ref{conjestion sec} we will prove Theorem \ref{full congestion}.
In Section \ref{distance approximating tree sec} we will prove Theorem \ref{connecting the parameters thm} by defining a new algorithm, and showing how it relates to the current-best approximation algorithms for each of additive distortion, multiplicative distortion, and tree-length.
As part of considering the current-best algorithms, we also resolve a conjecture about which algorithm to approximate tree-length has the best worst-case performance.
In Section \ref{curve sec} we construct counterexamples demonstrating that the existing definitions of curvature besides hyperbolicity are not sufficient to imply rough congestion, or in one case that the conjecture is true even without the assumption on curvature.

For vertices $u,v$, let $P_{u,v}$ denote an arbitrary (but fixed) shortest path between $u$ and $v$ in $G$.
We will use Gromov's product, which for vertices $a,b,r$ is 
$$(a,b)_r := \frac{d(r,a) + d(r,b) - d(a,b)}{2}.$$
As intuition for this formula: if the distance approximating tree $T$ has a subtree whose leaves are $\{T(a),T(b),T(r)\}$ and a (necessarily unique) vertex $u$ with degree $3$, then the value $(a,b)_r$ should be approximately the distance between $T(r)$ and $u$ (up to the quality of the tree).
Surprisingly, this intuition will actually be more useful while we prove congestion than when we study distance approximating trees.
Simple arithmetic shows that (\ref{4 point condition}) is equivalent to 
\begin{equation}\tag{1 alternate}
(x,z)_r \geq \min\{(x,y)_r, (y,z)_r \} - k.
\end{equation}

Let $n = |V|$.
We use standard notation for the neighborhood of a vertex, $N(u) = \{w : uw \in E\}$.
For (partial) tree-decomposition $(T, \phi)$, we define $\phi^{-1}:V(T) \rightarrow 2^{V(G)}$ to be $\phi^{-1}(v) = \{x \in V(G): v \in \phi(x)\}$.
If $U \subset V(T)$, then we define $\phi^{-1}(U) = \cup_{v \in U} \phi^{-1}(v)$.

\section{Cores in Weighted Graphs}\label{conjestion sec}
The proof to Theorem \ref{full congestion} is inspired by the proof to Theorem \ref{SODA full thm}.
The proof to Theorem \ref{SODA full thm} considers an isometric embedding of the graph $G$ in a discretization of its injective hull $H(G)$ called the ``Hellification'' of $G$.
The center of the core they construct is a point in $G$ that is closest to the point $m'$ in $H(G)$ that minimizes $\sum_{u \in V}d_{H(G)}(m',u)$.
The construction of $H(G)$ is only described for unweighted graphs, and it is not clear which properties of $H(G)$ will hold in more general settings.
We modify their proof by showing that the vertex $m$ in $G$ that minimizes $\sum_{u \in V}d(m,u)$ is the center of a core.
The approach is similar, but we replace the argument using fibers and the eponymous Helly property with methods derived from ``halfspaces of hyperbolic spaces'' as developed in \cite{Y2}.
Because $m$ can be found by performing Dijkstra's algorithm $|V|$ times, this leads to a faster algorithm to find a core.
While there is a bound on $d_{H(G)}(m',V(G))$, there is no known bound on $d_{H(G)}(m',m)$ and so the proofs are independent.

Similar to the proof of Theorem \ref{SODA full thm}, we will make use of different versions of hyperbolicity.
Unfortunately, while those different versions have been proven to be roughly equivalent in unweighted graphs, there is no known analogous relationship in weighted graphs.
Hence, we devote the next subsection to establishing such connections.

\subsection{Alternative Forms of Discrete Hyperbolicity}
To simplify notation in this subsection, let $a,b,c$ be an arbitrary triple of vertices in $G$.

A graph has $k$\emph{-slim triangles} if for every triple of vertices $a,b,c$ we have $P_{b,c} \subseteq B_{k}(P_{a,b} \cup P_{a,c})$.
We let $\widehat{\delta}(G)$ denote the smallest $k$ such that this property holds, and when there is no chance of confusion we will simply use $\widehat{\delta}$.
The invariants $\delta$ and $\widehat{\delta}$ are closely related.
It has been shown that $\widehat{\delta} \leq 3 \delta$ for geodesic spaces (e.g. see \cite{ABCFLMSS,BP}).
For weighted graphs, any bound on $\widehat{\delta}$ using $\delta$ must also incorporate some function of $W$.
To see why, let $0 < \epsilon \ll W$ and consider the weighted four cycle with $V = \{v_1, v_2, v_3, v_4\}$, $E = \{v_1v_2, v_2v_3, v_3v_4, v_4v_1\}$, $w(v_1v_2) = w(v_2v_3) = W/2$, $w(v_1v_4) = W$, and $w(v_3v_4) = \epsilon$ for small $\epsilon > 0$.
A direct calculation gives that $\delta(G) = \epsilon/2$ and $\widehat{\delta}(G) = W/2$.
We give new results that account for $W$ in two ways.
Lemma \ref{corner of triangle near opposing geodesic} and the following proposition are a generalization of an argument in \cite{ABCFLMSS,BP}, and Lemma \ref{thin triangle analogue} is a generalization of an argument in \cite{GH} for the related notion of thin triangles.
Ultimately we only use Lemma \ref{thin triangle analogue} later in this paper, but we present all results as they may be applicable in future works.

To give intuition for the usage of Gromov's product in the following statements, the motivation is to differentiate $P_{a,b}$ from $P_{a,c}$.
To see how this works observe that $d(b,c) = (a,c)_b + (a,b)_c$.
Thus we can partition $P_{b,c}$ based on vertices within distance $(a,c)_b$ of $b$ and those that are within distance $(a,b)_c$ of $c$.
Based on the intuition of a distance approximating tree, we can refine the formula $P_{b,c} \subseteq B_{k}(P_{a,b} \cup P_{a,c})$ to say that most of the vertices in the first category are inside $B_k(P_{a,b})$ while most of the second category are inside $B_k(P_{a,c})$.

For $y \in P_{b,c}$ define $t_{y:a,b,c} = d(b,y) - (a,c)_b$.
When we try to prove the $k$-slim triangles property holds: if $t_{y:a,b,c}$ is positive, then we will try to prove that $y$ is close to $P_{a,c}$; otherwise we will try to prove $y$ is close to $P_{a,b}$.
It is useful to first consider vertices where $t_{y:a,b,c}$ is close to zero.
By the triangle inequality, every vertex in $P_{b,c}$ will be at least $(b,c)_a$ away from $a$.
In the following statement we will show that some vertex will be close to this bound in a hyperbolic graph.
By the intuition of a distance approximating tree, it is natural to look for such a vertex at the split where $t_{y:a,b,c}$ is near zero.

\begin{lemma}\label{corner of triangle near opposing geodesic}
If $y_*$ is the vertex in $P_{b,c}$ that minimizes $|t_{y:a,b,c}|$, then $d(a,y_*) \leq 2\delta(G) + W/2 + (b,c)_a$.
\end{lemma}
\begin{proof}
Apply (\ref{4 point condition}) to $a,b,c,y_*$ to observe that (the second line uses that $y_* \in P_{b,c}$ implies $d(c,y_*) = d(b,c)-d(b,y_*)$; the third line uses arithmetic; and the fourth line uses the definition of $t_{y_*:a,b,c}$) 
\begin{eqnarray*}
d(a,y_*)	& \leq & 2 \delta - d(b,c) + \max\{d(b,y_*) + d(a,c), d(c,y_*) + d(a,b) \}\\
			& = & 2 \delta + \max\{d(b,y_*) + d(a,c) - d(b,c), -d(b,y_*) + d(a,b) \}\\
			& = & 2 \delta + \max\{d(b, y_*) + (b,c)_a - (a,c)_b, -d(b,y_*)  + (b,c)_a + (a,c)_b\} \\
			& = & 2 \delta + (b,c)_a + \max\{t_{y_*:a,b,c}, -t_{y_*:a,b,c}\} \\
			& = & 2 \delta + (b,c)_a + |t_{y_*:a,b,c}|. \\
\end{eqnarray*}
The proof concludes by observing from the definition of $W$ and $y_*$ that $|t_{y_*:a,b,c}| \leq W/2$.
\end{proof}

\begin{proposition}
For any graph we have that $\widehat{\delta}(G) \leq 3\delta(G) + W/2$.
Moreover, if $w \in P_{a,b}$ and $d(a,w) \leq (b,c)_a$, then there is a point $y \in P_{a,c}$ such that $d(w,y) \leq 3\delta(G) + W/2$ and $|d(a,y) - d(a,w)| \leq \delta(G) + W/2$.
\end{proposition}
\begin{proof}
Let $a,b,c$ be an arbitrary triple of vertices in $G$.
Let $w \in P_{a,b}$; to prove the lemma we will demonstrate the existence of $y \in P_{a,c} \cup P_{b,c}$ such that $d(w,y) \leq 3\delta + W/2$.

Because $w \in P_{a,b}$, a direct calculation gives that $(a,b)_w = 0$.
Applying hyperbolicity we have $(a,b)_w \geq \min\{(a,c)_w, (b,c)_w\} - \delta$; by symmetry we assume that $(a,c)_w \leq \delta$.
The lemma then follows from applying Claim \ref{corner of triangle near opposing geodesic} to $w,a,c$ to find suitable $y$.

By simple calculation we have that $d(a,w) \leq (b,c)_a$ is equivalent to $(a,c)_w \leq (b,c)_w$.
So, to prove the ``moreover'' part of the lemma, we must show that $|d(a,y) - d(a,w)| \leq \delta + W/2$.
This inequality follows from $d(a,y) = t_{y:a,b,c} + (c,w)_a = t_{y:a,b,c} + d(a,w) - (a,c)_w$ and that $0 \leq (a,c)_w \leq \delta$.
\end{proof}

\begin{lemma}\label{thin triangle analogue}
Fix arbitrary vertices $a,b,c$ in $G$, and let $x \in P_{a,b}$ such that $d(x , a) \leq (b,c)_a$.
Let $y \in P_{a,c}$ such that $|d(a,y) - d(a,x)|$ is minimized.
Then $|d(a,y) - d(a,x)| \leq W_{/2}$ and $d(x,y) \leq 4 \delta(G) + W_{/2}$.
\end{lemma}
\begin{proof}
That $|d(a,y) - d(a,x)| \leq W_{/2}$ follows from the definition of $W_{/2}$.
Because $x \in P_{a,b}$ we have $(x,b)_a = d(a,x)$ and symmetrically $(c,y)_a = d(a,y)$.
\begin{eqnarray*}
(x,y)_a & \geq & -\delta + \min\{(x,b)_a, (b,y)_a\} \\
	& = & -\delta + \min\{d(x,a), (b,y)_a\} \\
	& \geq & \min\{ d(x,a) - \delta, (b,c)_a - 2\delta, (c,y)_a - 2\delta\} \\
	& \geq & \min \{d(x,a) , d(a,y) \} - 2 \delta. \\
\end{eqnarray*}
We then have 
$$d(x,y) = d(x,a) + d(y,a) - 2(x,y)_a \leq |d(x,a) - d(y,a)| + 4 \delta \leq W_{/2} + 4\delta.$$ 
\end{proof}

\subsection{Congestion}

Recall that $m$ is the vertex in $G$ that minimizes $\sum_{u \in V}d(m,u)$.
Let us give intuition for the next proof.
We consider an arbitrary vertex $w$, and a vertex $w' \in P_{m,w}$ that is sufficiently far from $m$.
We consider another arbitrary vertex, $v$, and show that if $(v,w)_m \geq d(m,w')$, then $v$ is substantially closer to $w'$ than to $m$.
By definition of $m$, this can not happen for too many choices of $v$.
To see why such a statement is useful, consider the intuition of a distance approximating tree to see that if $(v,w)_m$ is not too large, then $P_{v,w}$ will contribute to the core at $m$.

\begin{proposition}\label{centroid splits into bounded components}
Let $0 < \alpha < 1/2$.
Suppose that $w', w \in V$ such that $w' \in P_{m,w}$ and $d(m,w') \geq \frac{1-\alpha}{1-2\alpha}(4 \delta(G) + 2 W_{/2})$.
Let $S = \{u \in V : (u,w)_m < d(m,w')\}$.
Under these circumstances, $|S| \geq \alpha n$.
\end{proposition}
\begin{proof}
For any vertex $u$ we have by the triangle inequality $|d(u,m) - d(u,w')| \leq d(m,w')$.
Suppose $v \notin S$ so that $(v,w)_m \geq d(m,w')$.
By Lemma \ref{thin triangle analogue} there exists a $v' \in P_{m,v}$ such that $|d(m,v') - d(m,w')| \leq W_{/2}$ and $d(v',w') \leq 4\delta + W_{/2}$.
It follows
\begin{eqnarray*}
d(v,w')	& \leq & d(v,v') + d(v',w') \\
	& = & d(v,m) - d(m,v') + d(v',w') \\
	& \leq & d(v,m) - (d(m,w')-W_{/2}) + (4\delta + W_{/2}).
\end{eqnarray*}
We first apply the the definition of $m$ and then apply the above inequality to see that 
\begin{eqnarray*}
 0  & \leq & \sum_{u \in V} d(w',u) - \sum_{u \in V} d(m,u) \\
	& = &	\sum_{u \in S} (d(w',u)-d(m,u)) + \sum_{v \notin S}(d(w',v)-d(m,v)) \\
	& \leq & |S| d(m,w') + (n-|S|)(4\delta + 2W_{/2} - d(m,w')),
\end{eqnarray*}
and therefore $|S| \geq n\frac{d(m,w') - 4\delta - 2W_{/2}}{2d(m,w') - 4\delta - 2W_{/2}}$.
The proposition follows from the bound on $d(m,w')$ in the statement of the theorem.
\end{proof}

\begin{proof}[Proof of Theorem \ref{full congestion}]
To simplify notation, we will use $r$ to denote $r(\alpha)$.
We will first prove that for every vertex $w$ there exists a set $S_w \subseteq V$ such that $|S_w| \geq \alpha n$ and for each $u \in S_w$ we have that $(w,u)_m \leq r'$ where $r' = W + \frac{1-\alpha}{1-2\alpha}(4 \delta(G) + 2 W_{/2})$.
The second part of the proof is to show that if $u \in S_w$, then $P(u,w) = P(u,w;B_r(m))$, which will imply the theorem.

By the triangle inequality $(a,b)_c \leq d(c,a)$ for any triples $a,b,c$, so if $d(m,w) \leq r'$, then we can simply set $S_w = V$.
Therefore assume that $d(m,w) > r'$.
By the definition of $W$, there exists a vertex $w' \in P_{m,w}$ such that $r'-W \leq d(m,w') \leq r'$ (possibly $w' = w$).
We let $S_w$ to be the set $S$ from Proposition \ref{centroid splits into bounded components}.
This establishes the first part of the proof.

Now suppose that $u \in S_w$, and we will show that $P_{u,w}$ contains a vertex inside $B_r(m)$.
Apply Lemma \ref{corner of triangle near opposing geodesic} to $m,w,u$ to find $y \in P_{w,u}$ such that 
$$d(m,y) \leq 2\delta + W/2 + (u,w)_m \leq 2\delta + W/2 + r' \leq r.$$
\end{proof}

\section{Algorithms that Approximate Tree-like Structure}\label{distance approximating tree sec}

\subsection{Historical Review}\label{History of tree-like algorithms subsec}

The following results are for unweighted graphs only.
Dourisboure and Gavoille \cite{DG} showed that $\tl(G)$ could be approximated to within a constant factor using something called a \emph{layering partition}, which at the time was connected to the chordality of a graph \cite{CD}.
B{\u a}doiu, Indyk, and Sidiropoulos \cite{BIS} found a method to build a distance approximating tree that approximates $\gamma_{\mult}(G)$ up to a constant factor using something called a \emph{tree-like decomposition}.
Chepoi, Dragan, Newman, Rabinovich, and Vax\`{e}s \cite{CDNRV} discovered the similarity between a tree-like decomposition and a layering partition (in fact, they are exactly the same for unweighted graphs, but there is a small difference in how the two papers transform the partition into a distance approximating tree), which led to improved bounds on the approximation of $\gamma_{\mult}(G)$.
Soon after Dragan \cite{Dr} would connect the result about $\gamma_{\mult}(G)$ and layering partitions back to Dourisboure and Gavoille's work about $\tl(G)$.
It is folklore that $\gamma_{\mult}(G) \leq 1 + \gamma_{\add}(G)$.
When the full version \cite{CDNRV2} of the conference proceedings \cite{CDNRV} appeared, it would contain additional results about layering partitions, including that $\gamma_{\add}(G) \leq O(\gamma_{\mult}(G))$.
This completes the proof to Theorem \ref{all the same for unweighted}.

For weighted graphs the story becomes more complex.
We consider four algorithms, which come from a rich diversity of motivations.
The first is by Buneman \cite{B1,B2} as he studied in the 1970's the phylogeny of \underline{Canterbury Tales} manuscripts.
Next is the approach of Gromov \cite{G} in the 1980's to construct a distance approximating tree of finite subsets of a hyperbolic group, where $\|G -T\|_\infty$ is bounded by a function of $|V|$ and $\delta(G)$.
The third algorithm is by Agarwala,  Bafna, Farach, Paterson, and Thorup's work\footnote{It has been commented that this result has been written in a manner to accept as a black box any routine that can find a best-approximation ultra-metric, and therefore it is a meta-algorithm with a ``continuum'' of possibilities.  However, in that paper they specifically describe a single, explicit algorithm for the problem, which we interpret to be ``the'' algorithm of theirs.  Other authors \cite{DHHKMW} have made the same choice.} \cite{ABFPT} on the phylogeny of biological specimens in the 1990's, which is a polynomial-time constant-factor approximation of $\gamma_{\add}(G)$. 
Finally, we will consider the previously mentioned tree-like decompositions 
of B{\u a}doiu, Indyk, and Sidiropoulos \cite{BIS} motivated in part by a question from computational geometry, which when combined with a follow-on procedure (that is outside the scope of this work) is the current-best polynomial-time method to approximate $\gamma_{\mult}(G)$.

A central aim of this paper is to draw connections between these algorithms as in the case for unweighted graphs.
We will show that---in some fundamental sense---the above four algorithms are the same.
That they are the same in a ``fundamental sense'' is not the same thing as being exactly the same; for example Buneman's algorithms are the only ones to not use an (arbitrarily chosen) root vertex.
We will define what it means for two algorithms to be the same in a ``fundamental sense'' by example, as one of these connections has already been made: Dress, Holland, Huber, Koolen, Moulton, and Weyer-Menkhoff \cite{DHHKMW} stated that Gromov's algorithm is the same as Agarwala et al.'s algorithm. 
They attribute this observation to others, but we can find no other works---before or after---that make this observation.
This connection is not at all obvious, as Gromov and Agarwala et al. use very different language in the description of their algorithms and other works have compared them side-by-side (\cite{CF2}, page 609) without coming to the same conclusion (not to mention that Gromov's algorithm, as written, requires at least $n!$ computations).
Our concept of fundamentally similar algorithms is grounded in the idea of an instructor of a programming course determining if two homework assignments are copies of each other: if we strip away cosmetic details like variable names, are the two programs performing the same operations?
For Gromov and Agarwala et al., the answer is yes.
In the third paragraph of Section 5 of \cite{DHHKMW}, Gromov's algorithm is described in a manner that directly lines up with the procedures of Agarwala et al.
For a more elaborate comparison of the two algorithms see \cite{Y2}, where we construct an explicit bijection from the operations of Agarwala et al.'s algorithm to the steps of Gromov's algorithm.

If we relax our notion of ``cosmetic details'' to ``bounded modifications,'' then connections to the other algorithms can be made, which will be made formal in this paper and is the heart of the proof to Theorem \ref{connecting the parameters thm}.
In some cases, this requires extensive analysis.
For example, Gromov merges parts of the graph based on the solution to some optimization problem, while B{\u a}doiu, Indyk, and Sidiropoulos merge parts of the graph based on the existence of paths with certain properties.
We will prove that a solution to Gromov's optimization problem is (up to a bound described by Theorem \ref{gromov is layering tree them} and the discussion before it) \emph{the special paths described by B{\u a}doiu, Indyk, and Sidiropoulos!}
Proving this connection is far from trivial, which explains why it has been missed before.
This is what we mean by two algorithms being the same in a fundamental sense: although they use different notation---an optimization problem versus special paths---they seek the same structures and then use them in the same manner.

\subsection{One Algorithm to Unite Them All}
The following algorithm is our amalgamation of tree-like decompositions from \cite{BIS} with Gromov's algorithm \cite{G}.

\begin{definition}[merged BFS]\label{defn merged BFS}
Let $P_{a,b}, P_{a,c}$ be two paths with common endpoint $a$ such that $P_{a,b} \cup P_{a,c}$ is a tree.
Let $S_x$ denote the vertices of $P_{a,x}$.
Let $w_1, w_2, \ldots, w_k$ denote an ordering of $S_b \cup S_c$ such that when $i < j$ we have $d(a,w_i) \leq d(a,w_j)$.
The process of \emph{merging paths $P_{a,b}$ and $P_{a,c}$} is to delete the edges in $P_{a,b}$ and $P_{a,c}$ and replace with with edges $w_iw_{i+1}$ for $1 \leq i < k$ with weights $w(w_iw_{i+1}) = d(a,w_{i+1}) - d(a,w_i)$.

The process to construct a \emph{merged BFS} $T$ with root $r \in V$ is as follows.
Construct a breadth-first-search tree $T'$ with root $r$.
Construct $T$ from $T'$ by merging paths $P_{r,u}$ and $P_{r,v}$ for each relevant edge $uv$ not in $T'$.
\end{definition}

We will give a deeper treatment of Gromov's algorithm \cite{G} in Section \ref{hyperbolicity subsec}.
We will give a deeper treatment of B{\u a}doiu, Indyk, and Sidiropoulos' algorithm \cite{BIS} in Section \ref{mult dist subsec}.

In what follows, we assume that $r$ is fixed.
Recall that $P_{a,b}$ denotes a shortest path in $G$ with endpoints $a$ and $b$; let $T[a,b]$ denote the unique path in $T$ with endpoints $a$ and $b$.
We define $\Delta_G = \max\{d_G(u,v):u \in T[r,v], d_T(u,v) \leq W\}$.
We make the following statements about merged BFS $T$ from graph $G$.
The proofs are short, and we only provide key details (the arguments are also standard, as similar statements are made in \cite{DG,BIS,CDNRV}).
\begin{enumerate}
	\item For every $u \in V$ we have $d_G(r,u) = d_T(r,u)$ (it suffices to consider $|E(G) \setminus E(T')| = 1$).
	\item If $uv$ is a relevant edge, then $u \in T[r,v]$ or $v \in T[r,u]$.
	\item The order of pairs of paths being merged does not affect the construction of $T$ (it suffices to consider $|E(G) \setminus E(T')| = 2$).  Moreover, while there can be different BFS $T'$ possible, the merged BFS $T$ is unique (for two BFS $T', T''$ induct on $E(T') \setminus E(T'')$, using that each edge $E(T') \setminus E(T'') \subseteq E(T')$ is relevant).
	\item For every pair of vertices $a,b$ we have $d_T(a,b) \leq d_G(a,b)$ (suffices to prove it for relevant edges).
	\item If $u \in P_{r,a}$, then $d_G(u,a) = d_T(u,a)$.
	\item We have $a \in T[r,b]$ if and only if there is a path $P$ (not necessarily shortest) of relevant edges in $G$ from $a$ to $b$ such that every vertex $v \in P$ satisfies $d(r,v) \geq d(r,a)$ (induct on $E(G) \setminus E(T)$).  
	\item Fix $a,b \in V$ and $\alpha \in \mathbb{R}$.  Every vertex $v \in T[a,b]$ satisfies $\alpha \leq d(r,v)$ if and only if there exists a path $P$ in $G$ with endpoints $a,b$ such that for every vertex $v \in P$ we have $\alpha \leq d(r,v)$ (apply the previous statement twice).
	\item For every pair of vertices $a,b$ we have $d_G(a,b) \leq d_T(a,b) + 2 W + \Delta_G$. To prove this, let $q$ denote the vertex of degree $3$ in the subgraph $T[a,r] \cup T[b,r]$ if it exists, otherwise $q = a$ where $d(r,a) \leq d(r,b)$.  Let $a' \in P_{a,r}, b' \in P_{b,r}$ be such that $d(a',r), d(b',r) \in [d(q,r)-W, \leq d(q,r)]$.  Observe that 
$$ d_G(a,b) \leq d_G(a,a') + d_G(b,b') + d_G(a',b') \leq$$
$$ \leq d_T(a,a') + d_T(b,b') + \Delta_G \leq d_T(a,b) + 2W + \Delta_G. $$
	\item $\Delta_G - W \leq \|T - G\|_\infty \leq \Delta_G + 2W$.
	\item We define  $D_\square$ to be the largest value of $\min \{d(u,v): u \in P_i, v \in P_{i+2}, i \in \{1,2\}\}$ for paths $P_1, P_2, P_3, P_4$ such that $P_j \cap P_{j+1} \neq \emptyset$ (where $j$ is modulo $4$).\footnote{Bounding $D_\square$ is an unnamed condition (e.g. see Lemma 2.1 in \cite{BIS}) that we informally call the ``slim rectangles'' condition as it is stronger but conceptually similar to the slim triangles condition (it suffices that each $P_i$ is a path).}    We have that $D_\square \geq \frac{\Delta_G}3 - \frac{3W}2$.  To see why, let $a \in T[r,b]$ such that $d_G(a,b) = \Delta_G$.  
Let $a' \in P_{r,a}$ such that $|d(a',a) - \Delta_G/3| \leq W/2$ and symmetrically define $b' \in P_{r,b}$.  Let $P_1 = P_{a,a'}$, $P_2 = P_{a',r} \cup P_{r,b'}$, $P_3 = P_{b',b}$, and let $P_4$ be a path in $G$ from $a$ to $b$ such that every vertex is at least $d(a,r)$ distance from $r$.  The bound then comes from 
$$ \min \{d(u,v): u \in P_1, v \in P_{3}\} \geq d(a,b) - d(a,u) - d(b,v) \geq \Delta_G - 2(\Delta_G/3 + W/2) = \frac{\Delta_G}3 - W$$
and
$$ \min \{d(u,v): u \in P_2, v \in P_{4}\} \geq d(v,r) - d(u,r) \geq d(a,r) - \max\{d(r,a'),d(r,b')\} \geq \frac{\Delta_G}3 - \frac{3W}2.$$
\end{enumerate}

In the next four subsections we will show how each of the four described algorithms is a slight modification of merged BFS.
In this approach we do not prove Theorem \ref{connecting the parameters thm} by directly comparing elements of $\{\tl(G), \gamma_{\mult}(G), \gamma_{\add}(G)\}$ to each other, but instead indirectly by comparing them to $\Delta_G$ and $D_\square$.
Theorem \ref{connecting the parameters thm} will follow from Theorem \ref{connecting tree length thm}, Corollary \ref{connecting tree length thm 2},  Corollary \ref{connecting add distortion thm}, Theorem \ref{connecting mult distortion thm}, Corollary \ref{connecting mult distortion thm 2}, and Remark \ref{multiply by a common factor}.

\subsection{Tree Length}\label{tree-length subsec}
We are not aware of any work that considers tree-length in the presence of edge weights.
The following theorem is a generalization of the approach of Dourisboure and Gavoille \cite{DG} by swapping out layering partitions for merged BFS.

\begin{theorem}\label{connecting tree length thm}
If $G$ has no irrelevant edges, then $\tl(G) \leq \Delta_G$.
\end{theorem}
\begin{proof}
Our proof is constructive: we will describe a tree decomposition $(T_*,\phi)$ that satisfies $\tl(T_*, \phi) \leq \Delta_G$.

Let $T_*$ be the unweighted graph that underlies the merged BFS $T$.
That is, let the edge set and vertex set of $T_*$ be exactly as $T$, but we drop the weight function $w$.
We define the function $\phi:V \rightarrow S(T_*)$ as $\phi(a) = \{b: a \in T[b,r], d_T(b,r) \leq d_T(a,r) + W\}$.
In other words, $\phi(a)$ is the replica in $T_*$ of the subtree in $T$ that is rooted at $a$ and has height $W$.

Because $T$ is a tree, if $\phi(x) \cap \phi(y) \neq \emptyset$ then $x \in T[r,y]$ or $y \in T[r,x]$ and $d_T(x,y) \leq W$.
By definition of $\Delta_G$ we have $\tl(T_*, \phi) \leq \Delta_G$.
\end{proof}

\begin{theorem}\label{tree length to thin rectangles}
For any graph, $\tl(G) \geq D_\square$.
\end{theorem}
\begin{proof}
Dourisboure and Gavoille (\cite{DG}, Lemma 5) proved something stronger than: for any tree decomposition $(T, \phi)$ and connected sets $A,B$ in $G$ there exists a node $X$ in $T$ such that $\phi^{-1}(X)$ (1) intersects each of $A$ and $B$ or (2) separates $A$ from $B$.
Let $P_1, P_2, P_3, P_4$ be as in the definition of $D_\square$.
Now apply Dourisboure and Gavoille's result with $A = P_1$ and $B = P_3$.
In case (1) $\phi^{-1}(X)$ contains an element from each of $P_1$ and $P_3$ and in case (2) $\phi^{-1}(X)$ contains an element from each of $P_2$ and $P_4$.
In either case $\phi^{-1}(X)$ contains a pair of vertices at least $D_\square$ apart.
\end{proof}

Combining Theorem \ref{tree length to thin rectangles} with the bound above relating $\Delta_G$ and $D_\square$ gives the following result.

\begin{corollary}\label{connecting tree length thm 2}
For any graph, $\tl(G) \geq \frac{\Delta_G}3 - \frac{3W}2$.
\end{corollary}

We claim to be connecting the best algorithms for tree-length, multiplicative distortion, and additive distortion, which motivates us to consider the strengths of other algorithms that approximate these parameters.
Layering partitions are only one of three algorithms presented by Dourisboure and Gavoille \cite{DG} for computing a tree-decomposition with small length in an unweighted graph.
They admit that one of the algorithms is worse than layering partitions, and we now investigate the third algorithm, which we will call \emph{$(k, \ell)$-Disk Tree}.
The algorithm progresses iteratively, let $(T_i, \phi_i)$ denote the partial tree-decomposition constructed at the end of stage $i$ (we begin with $T_0 = \emptyset$).
During stage $i$, we grow $T_{i-1}$ into $T_i$ by adding one node to the tree and staying constant on the existing nodes (in other words, $\phi_i^{-1} \vert _{V(T_{i-1})} = \phi_{i-1}^{-1}$).

We first discuss some ideas related to the algorithm to give the reader an intuition for the steps.
Afterwards we will give the details of the algorithm.

Let $H_i$ be the graph induced on the vertex set $\phi^{-1}(T_{i})$.
For partial tree decomposition $(T_i, \phi_i)$, let $\mathcal{P}_\ell$ be the property that 
\begin{itemize}
	\item if $x$ and $y$ are in $H_i$ and there exists a path from $x$ to $y$ in $G - H_i$, then $d(x,y) \leq \ell$ and
	\item for each connected component $C$ of the graph $G - H_i$ there exists a vertex $w \in V(T)$ such that $\cup_{u \in C} (N(u) \cap H_i) \subseteq \phi^{-1}(w)$.
\end{itemize} 
It can be proved that if $(T', \phi')$ is a tree decomposition and $T''$ is a subtree of $T'$, then $(T'', \phi' \vert_{V(T'')})$ satisfies the property $\mathcal{P}_{\tl(T', \phi')}$.
The design of the $(k, \ell)$-Disk Tree algorithm centers around maintaining the $\mathcal{P}_{\ell}$ property for user-provided parameter $\ell$ as nodes are added to the tree.

Let $v_i$ be the node added during state $i$ (so $\{v_i\} = V(T_i) \setminus V(T_{i-1})$).
It is well-known in the field of algorithmic results for constructing a tree decomposition that we can assume $\phi^{-1}(v_i) \setminus H_{i-1}$ is nonempty (when $i > 1$).
Suppose $y_i \in \phi^{-1}(v_i) \setminus H_{i-1}$; we may further assume that $y_i$ has a neighbor $x_i$ in $H_i$.
Those ideas give intuitive insight to the $(k, \ell)$-Disk Tree algorithm, which builds the set $\phi^{-1}(v_i)$ by first choosing $x_i$.

Formally, the algorithm performs as follows during stage $i$.
It chooses some connected component $C$ of the graph $G - H_{i-1}$, and let $x_i \in N(C) \cap H_{i-1}$ (if $i=1$, then let $x_i$ be any vertex of $G$ chosen randomly).
In $T$, the vertex $v_i$ will be a leaf appended to the vertex $w$ guaranteed to exist by the second criteria of property $\mathcal{P}_{\ell}$.
We originally set $\phi^{-1}(v_i) = (N(C) \cap H_{i-1}) \cup (B_k(x_i) - H_{i-1})$.
Note that this original set satisfies $\max\{d(u,w):u,w \in \phi^{-1}(v_i)\} \leq k + \max\{\ell, k\}$.
However, we still need to make sure that $T_i$ satisfies $\mathcal{P}_\ell$, and so elements of $B_k(x_i) - H_{i-1}$ are removed one at a time from $\phi^{-1}(v_i)$ until $\mathcal{P}_\ell$ is satisfied.
If $\phi^{-1}(v_i) \subseteq H_{i-1}$ (in other words, the algorithm failed because no vertex remains that could correspond to $y_i$), then repeat stage $i$ with a different pair $(C, x_i)$.
Otherwise add $v_i$ to the tree an proceed to stage $i+1$.

Dourisboure and Gavoille \cite{DG} prove that when $k = \ell$, each $(T_i, \phi_i)$ is a distance approximating tree of $H_i$ such that $\max_{\phi_i(x) \cap \phi_i(y) \neq \emptyset} d_G(x,y) \leq 2k$.
Hence the algorithm successfully builds a tree decomposition with small length if it terminates.
Furthermore, they showed that if $k = \ell \geq 3 \tl(G) - 2$, then there exists a $r$ such that $H_r = G$, and so $(k, \ell)$-Disk Tree is a $6$-approximation algorithm for constructing a minimum-length tree decomposition.
They believed that their algorithm was significantly stronger than the bound that they could prove, and they conjectured that with a refinement, the $(k, \ell)$-Disk Tree algorithm can be augmented into a $2$-approximation algorithm.

\begin{conjecture}[\cite{DG}, conjecture 1]\label{tree length conjecture}
Suppose that $k = \ell = \tl(G)$, and whenever there is a choice between removing $z$ or $y$ while iteratively removing vertices from $\phi^{-1}(v_i)$, we always choose to remove $z$ if $d(z, x_i) > d(y, x_i)$.
Under these conditions, there exits an $r$ where $H_r = G$.
\end{conjecture}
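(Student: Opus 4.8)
The conjecture is \emph{false}, and I would refute it with an explicit family of counterexamples: the ``thickened cycles'' $G_{r,\ell} = C_\ell \,\square\, P_{r\ell}$ (the Cartesian product of an $\ell$-cycle with a path on $r\ell$ vertices), taking $r > 3$ and $\ell \geq 7$. Writing vertices as pairs $(a,b)$ with $a \in \mathbb{Z}_\ell$ and $0 \leq b < r\ell$, the metric is $d\big((a,b),(a',b')\big) = d_{C_\ell}(a,a') + |b-b'|$, so $G_{r,\ell}$ is a long, thin cylinder. The plan has two parts: first pin down $tl(G_{r,\ell}) = \lfloor \ell/2\rfloor + 1$, and then show that the Disk-Tree algorithm, run with both of its parameters set to the common value $k := tl(G_{r,\ell})$ and with the tie-breaking rule prescribed in the conjecture, never reaches $H_r = G$.

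For the tree-length, the upper bound is the obvious ``slab'' decomposition: let $T$ be a path and place the pair of consecutive cross-sections $C_\ell \times \{j, j+1\}$ in bag $j$; consecutive bags overlap in a full cross-section, every edge of $G_{r,\ell}$ lies in some bag, and each slab has diameter $\lfloor\ell/2\rfloor + 1$. For the lower bound I would reuse the bag-lemma argument of Claim \ref{ringed tree has large tree length}: fix an interior copy $C_\ell \times \{b^{*}\}$ of the cycle (with $b^{*}$ near the middle of the path, which is why we ask $r>3$, so this copy is isometric in $G_{r,\ell}$ and far from the ends), take $G_1$ and $G_2$ to be nearly antipodal arcs of it, and apply Lemma \ref{bag lemma}. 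Either the guaranteed vertex of the decomposition tree has a bag meeting both $G_1$ and $G_2$, or its bag meets both ``gap'' arcs between them; in either case some bag contains two vertices at cycle-distance $\lfloor\ell/2\rfloor$, and since that bag must also contain a cycle-edge incident to one of its extreme vertices, its diameter is at least $\lfloor\ell/2\rfloor + 1$.

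The heart of the refutation is the second part: with $k = \lfloor\ell/2\rfloor + 1$ and $\ell \geq 7$ (so $k < \ell - 2$), the Disk-Tree algorithm jams. The governing fact is a mismatch of scales: $k$ exceeds the cycle radius $\lfloor\ell/2\rfloor$, so every ball $B_k(x)$ wraps the cylinder at the central cross-section, yet $k$ is still below $\ell - 2$, so $B_k(x)$ is a ``round'' region rather than a full slab. The prescribed tie-break --- always delete the vertex \emph{farther} from $x_i$ first --- is then exactly the wrong heuristic: it peels $B_k(x_i)$ symmetrically inward, trimming precisely the cross-sections in the path direction that a separating slab would need, so the algorithm can never install a slab-shaped bag. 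One then verifies that at every stage the only bags the algorithm is permitted to install --- subsets of a $k$-ball together with an old frontier, shrunk until property $P_i$ holds --- leave some component of $G - H_i$ that still wraps the cylinder and whose frontier with $H_i$ has diameter larger than $k$, so the single-bag-frontier clause of $P_i$ cannot be met with $S_i \not\subseteq H_{i-1}$. Hence every stage collapses to $S_i \subseteq H_{i-1}$, $H_i$ never grows, and $H_r \neq G$ for all $r$.

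The main obstacle is this last case analysis: one must show the jam is unavoidable \emph{for every} sequence of choices $(C, x_i)$ and \emph{every} tie-break-consistent order of deletions, not merely for one convenient execution, and one must isolate why the threshold is $\ell - 2$ --- i.e.\ prove that a $k$-ball with $k \leq \ell - 3$ can never serve as a cylinder-separating bag whose two frontiers each have diameter at most $k$, whereas for $k = \ell - 2$ it can. Everything else (the slab decomposition, the isometry of interior cycles, the bag-lemma lower bound) is routine once this structural obstruction is pinned down.
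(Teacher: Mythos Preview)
Your proposal is correct and follows essentially the same route as the paper: the same counterexample family $G_{r,\ell}=C_\ell\,\square\,P_{r\ell}$, the same slab decomposition for the upper bound on tree-length, and the same mechanism for the algorithm's failure (the $k$-ball is diamond-shaped rather than slab-shaped, and the farthest-first deletion rule peels it inward without ever flattening its boundary to diameter $\leq k$, so no separating bag is ever installed).

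The one genuine difference is in the lower bound for $tl(G_{r,\ell})$. The paper does not run the bag-lemma on an interior copy of $C_\ell$; instead it observes that $G_{r,\ell}$ contains an isometric $\lfloor\ell/2\rfloor\times(\lfloor\ell/2\rfloor+1)$ Euclidean grid and invokes the known tree-length of grids from \cite{DG}. Your bag-lemma route is more self-contained but, as written, has a gap: when the guaranteed bag separates $G_1$ from $G_2$ in \emph{all of $G$}, the separating set $\phi^{-1}(x)$ need not lie in the chosen cross-section --- paths between your two arcs can detour through neighbouring layers $b^*\pm 1$ --- so you cannot immediately conclude the bag contains two vertices of the \emph{same} cycle at distance $\lfloor\ell/2\rfloor$, and the ``must also contain a cycle-edge'' step giving the extra $+1$ is not justified. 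This is harmless for the refutation (any lower bound of order $\ell/2$ suffices, since you only need $tl(G_{r,\ell})<\ell-2$), but if you want the exact value the grid-subgraph argument is the cleaner way to get it.
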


Unfortunately, the conjecture is not true.
In a tree decomposition each set $\phi^{-1}(w)$ is a separating set when $w$ is not a leaf in $T$.
Sometimes the separating sets of a graph form long and thin subgraphs of $G$, instead of forming successively smaller balls as is suggested in the conjecture.
This is exactly the situation in the following counterexample, which contradicts Conjecture \ref{tree length conjecture} by showing the $(k,k)$-Disk Tree algorithm is at best a $4$-approximation algorithm.

\begin{theorem} \label{the fall of the disk tree}
Let $G_{r,t}$ be the Cartesian product of a cycle $C_t$ and a path $P_{rt}$ for $r > 3$ and even $t$.  
The tree length of $G_{r,t}$ is $\frac{t}{2} + 1$, but the $(k,k)$-disk-tree algorithm fails to terminate when $k < t - 2$.
\end{theorem}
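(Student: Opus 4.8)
The plan is to treat the two assertions separately: first pin down $tl(G_{r,\ell})$ by matching an explicit path decomposition against a lower bound obtained from the bag lemma (Lemma \ref{bag lemma}), and then show the disk-tree algorithm gets stuck by tracking how $H_i$ can interact with the cyclic direction of the cylinder.

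For the upper bound $tl(G_{r,\ell}) \le \lfloor \ell/2\rfloor + 1$ I would write $V(G_{r,\ell}) = C_\ell \times \{1,\dots,r\ell\}$, take $T$ to be a path $w_1 - w_2 - \cdots - w_{r\ell-1}$, and set $\phi^{-1}(w_i) = C_\ell \times \{i, i+1\}$. Each vertex $(b,i)$ then lies in at most the two consecutive bags $w_{i-1},w_i$, every cyclic edge sits in some $w_i$, every longitudinal edge $(b,i)(b,i+1)$ sits in $w_i$, and the $G_{r,\ell}$-diameter of $C_\ell\times\{i,i+1\}$ is $\lfloor\ell/2\rfloor+1$ (antipodal columns plus one longitudinal step), so this is a valid tree decomposition of the claimed length. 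For the matching lower bound, let $(T,\phi)$ be any tree decomposition and apply Lemma \ref{bag lemma} to the connected subgraphs $G_1 = C_\ell\times\{1\}$ and $G_2 = C_\ell\times\{r\ell\}$. In the first alternative some bag meets both end rings, forcing diameter $\ge r\ell-1 > \lfloor\ell/2\rfloor+1$ (this is the one place $r>3$ is used, with room to spare). Otherwise some bag $X=\phi^{-1}(x)$ separates the two end rings; since every longitudinal column-path meets $X$, the set $X$ hits every column, hence contains two vertices on antipodal columns at distance at least $\lfloor\ell/2\rfloor$. The fussy point of Part 1 is extracting the final $+1$: one must rule out that $X$ is (up to the choices in the lemma) a single isometric ring $C_\ell\times\{j\}$ of diameter exactly $\lfloor\ell/2\rfloor$, which I would do by showing that a decomposition with all bags of diameter $\le\lfloor\ell/2\rfloor$ cannot simultaneously keep every image $\phi(v)$ connected and cover the longitudinal edges incident to such a ring, so some bag must in fact span two consecutive levels and thereby contains a pair at distance $\lfloor\ell/2\rfloor+1$. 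I expect this bookkeeping to be the most delicate step of Part 1.

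For the algorithm, recall $H_0$ is a pruned ball and that, under the refinement rule, the new part of every bag $S_i$ is a ball $B_\rho(x_i)$ with $\rho\le k$; choose the random start $x_0$ in the interior of $G_{r,\ell}$, far from both ends. The structural fact I would build on is that a ball $B_\rho((a,c))$ reaching all $\ell$ columns ($\rho\ge\lfloor\ell/2\rfloor$) has a \emph{tent}-shaped frontier: it covers column $b$ up to level $c+\rho-d_{C_\ell}(a,b)$, so its peak (column $a$) and an antipodal valley differ in level by $\lfloor\ell/2\rfloor$, are at distance $2\lfloor\ell/2\rfloor$ in $G_{r,\ell}$, and are joined by a short path over the top through the still-uncovered region. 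Since $2\lfloor\ell/2\rfloor > k$ whenever $k<\ell-2$, such a tent violates the first clause of property $P_i$. The plan is to show this is the only obstruction that matters: maintain the invariant that $G_{r,\ell}-H_i$ contains a non-contractible cycle (equivalently, $H_i$ does not separate the two ends), and conclude $H_i\ne G_{r,\ell}$ at every stage, so the algorithm never terminates. For the inductive step one analyses the first stage $m$ at which $H_m$ would separate the ends: the new material is a ball, and I would argue that its contribution to the frontier of $H_m$, combined with whatever $H_{m-1}$ had already covered — which by the invariant cannot itself wrap around — still leaves a bump of height at least $\lfloor\ell/2\rfloor-1$, hence two frontier vertices at distance at least $2\lfloor\ell/2\rfloor-1\ge\ell-2>k$ joined by a path through the uncovered side, contradicting the first clause of $P_m$. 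The remaining sub-case is that the frontier at stage $m$ is instead flat across all columns; then the second clause of $P_m$ forces the full ring of frontier vertices into a single bag, i.e. a ball of radius $\ge\lfloor\ell/2\rfloor$, and that ball — whenever it was created — must itself have wrapped around, pushing the contradiction to an earlier stage; descending to the first such oversized bag (which cannot be $H_0$, since the pruning forced $H_0$ to be a small non-wrapping ball) closes the induction.

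The main obstacle I foresee is exactly this dichotomy in Part 2: controlling how a ball added at stage $m$ can complete a loop around the cylinder without its frontier ever acquiring a bump taller than $k$, and in particular making the recursive descent in the flat-frontier case bottom out cleanly at $H_0$. The role of $r>3$ throughout Part 2 is to guarantee that there is always room ``over the top'' for the short connecting path used against the first clause of $P_i$; the role of the hypothesis $k<\ell-2$ is precisely to make $2\lfloor\ell/2\rfloor$ (and even $2\lfloor\ell/2\rfloor-1$) strictly exceed $k$.
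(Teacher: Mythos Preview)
Your approach is plausible but diverges from the paper in both parts, and in each case the paper's route is considerably shorter.

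For the tree-length lower bound, the paper does not use Lemma~\ref{bag lemma} at all: it simply observes that $G_{r,\ell}$ contains an isometric $\lfloor \ell/2\rfloor \times (\lfloor \ell/2\rfloor+1)$ Euclidean grid and invokes the known tree-length of grids (\cite{DG}, Theorem~3). This sidesteps your ``fussy $+1$'' bookkeeping entirely, at the cost of a citation. (In fact the paper's own proof only extracts $\lfloor \ell/2\rfloor$ from the grid and leaves a gap of $1$ against the stated value, so your caution about that last unit is well-placed; neither argument, as written, pins it down.) Your bag-lemma approach is more self-contained and would be preferable if you can cleanly finish the $+1$ step, but as you suspected, that is where the work lies.

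For the algorithm, the paper avoids your invariant machinery altogether. It simply traces the first ball $B_k(x_0)$ operationally: the ball is a diamond whose boundary has diameter exceeding $k$; the refinement rule (remove the farthest points first) peels the diamond symmetrically so that two boundary points $\lfloor\ell/2\rfloor$ apart in each coordinate always survive; hence the ball keeps shrinking until it no longer separates the cylinder, at which point it stabilises at radius roughly $(\ell-3)/2$. Any subsequent ball suffers the same fate and ends up inside the first, so the algorithm stalls. This is a direct two-paragraph calculation rather than an inductive invariant, and it completely bypasses your dichotomy between ``bumpy'' and ``flat'' frontiers and the recursive descent you flagged as the main obstacle. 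Your non-contractible-cycle invariant is a reasonable organising principle and could be made to work, but it is doing more than is needed here: the point is not that $H_i$ can never wrap around in principle, but that the specific pruning order (farthest-first) never lets it.
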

\begin{proof}
Consider the Cartesian product of a cycle and a path---that is $V(G) = \{u_{i,j}: 1\leq i \leq t, 1 \leq j \leq rt\}$ and $E(G) = \{x_{a,b}x_{a,c}:|b-c| = 1\} \cup \{x_{a,c}x_{b,c} : |b - a| \equiv 1 (mod\ t)\}$.
We see that $\tl(G) \leq t/2 + 1$ by constructing a path decomposition $P = w_1, \ldots, w_{rt - 1}$ where $\phi^{-1}(w_i) = \{x_{a,b}: i \leq b \leq i+1\}$.
On the other hand, this graph contains a subgraph $H$ that is isometric to a $ t/2 +1$ by $t/2 + 2$ Euclidean grid, whose tree-length is known to be $t/2 + 1$ (\cite{DG}, Theorem 3).
Any tree-decomposition of $G$ induces a tree-decomposition of $H$, and since the subgraph is isometric we see that the tree length of $G$ is exactly $ t/2 + 1 $.

Now suppose we try to run $(h, h)$-Disk Tree on $G$, where $h \leq t - 3$.
Consider the first ball the algorithm grabs: it has a diamond shape and radius $h$.
If this radius is large enough to separate the graph, then there exists a ``left'' and ``right'' $H_i$ (draw the copies of $C_t$ vertically and the copies of $P_{rt}$ horizontally).
However, the diamond shape is a problem: the diameter of the boundary of the ball is $2h$.
The algorithm iteratively removes points from the boundary of the ball until the diameter of the boundary is at most $h$.
But it chooses the vertex to remove based on distance from the center: so the boundary never flattens into a column, but always maintains two points that are $h/2$ apart in each coordinate.
Eventually the ball has lost so many points that it no longer contains any vertex in a copy of $P_{r\ell}$, and the two components of $G-H_i$ merge into one.
With the two components merged, the ball continues to shrink until it has radius $(h-1)/2$.
The algorithm now stalls: the next ball will shrink using the same logic until it is a subset of the original ball.
\end{proof}

\subsection{Hyperbolicity}\label{hyperbolicity subsec}
We assume $r \in V$ is fixed.

We now consider the algorithm of Agarwala,  Bafna, Farach, Paterson, and Thorup \cite{ABFPT} to construct a distance approximating tree $T$ such that $\|G - T\|_\infty \leq 6 \gamma_{\add}(G)$.
Because the connection \cite{DHHKMW} to Gromov's \cite{G} algorithm has already been made, we will use his construction as a proxy for the construction of Agarwala et al.
We sketch his argument below.
\emph{Adding a vertex by subdividing edge $uv$} is to delete edge $uv$, add new vertex $z$, and add edges $uz,zv$ with weights satisfying $w(uz) + w(zv) = w(uv)$.
By construction, adding a vertex by subdividing an edge does not change the distances between any pair of vertices.

\begin{theorem}[\cite{B1}]\label{zero curvature tree}
If $\delta(G) = 0$, then there exists a distance approximating tree $T$ such that $\|T - G\|_\infty = 0$.
\end{theorem}
\begin{proof}[Sketch of the proof in \cite{G}]
For a fixed $r \in V$, construct a tree $T'$ such that every edge is incident with $r$.
For $u \neq r$, the edge $ur$ has weight $w(ur) = d_G(ur)$.
For every pair of vertices $u,v \in V \setminus \{r\}$ with $d(u,r) \leq d(v,r)$, add a vertex $w_{u,v}$ in $T'[r,u]$ by subdividing an edge such that $d(w_{u,v},r) = (u,v)_r$, and then merge the paths $T'[w_{u,v},r]$ and $T'[v,r]$.
\end{proof}

Gromov's algorithm has two steps: first he constructs a quasi-isometry from $G$ to $G'$ such that $\delta(G') = 0$, and then Gromov applies Theorem \ref{zero curvature tree} to $G'$.
We will revisit key lemmas Gromov established about the quasi-isometry, and we will show how that construction is based on calculations that differ in a bounded amount from the calculations to construct a merged BFS.
The algorithm uses a fixed root vertex $r$ that is chosen arbitrarily.
We will repeatedly use several facts about Gromov's product that are proven via the triangle inequality.
\begin{itemize}
	\item For every relevant edge $uv$ with $d(r,u) \leq d(r,v)$ we have $(u,v)_r \geq d(r,u) - W/2$.
	\item If $d(x,y) = d(x,z) + d(z,y)$ (which would hold if $z \in P_{x,y}$), then $(x,y)_r \leq (x,z)_r$ (symmetrically $(x,y)_r \leq (z,y)_r$).
\end{itemize}

\begin{definition}\label{defn f}
For a given metric $d$ over finitely many elements, let $F:V \times V \rightarrow \mathbb{R}$ such that
$$ f(x,y) = \max_{x=w_1, w_2, \dots , w_k = y} \min_{1 \leq i \leq k-1} (w_i , w_{i+1})_r ,$$
for arbitrary $k$, and let $d'$ be a new distance function over the vertices of $G$ defined as 
$d'(x, y) = d(x,r) + d(y,r) - 2f(x,y)$.
\end{definition}

\begin{lemma}[\cite{G}]\label{describing d'}
The following are true:
\begin{enumerate}
	\item $d'$ is a pseduo-metric (in other words, it is satisfies all the properties of a metric except for allowing $d'(x,y) = 0$ when $x \neq y$).
	\item $d'(x,y) \leq d(x,y)$.
	\item $d'(x,y) \geq d(x,y) - \delta(G) \lceil \log_2(n-2) \rceil$.
	\item For all $x$ we have $d'(x,r) = d(x,r)$.
	\item $d'$ is $0$-hyperbolic.  
\end{enumerate}
\end{lemma}

Gromov does not provide an efficient manner to compute $f(x,y)$, only commenting that we can assume no element of $w_1, w_2, \dots , w_k$ repeats and therefore finitely many terms need to be considered. 
Agerwala et al. compute $f$ efficiently by approximating a distance function derived from $d$ by an ultra-metric.
We connect Gromov's algorithm to a merged BFS through the following insight.

\begin{claim} \label{paths}
For every pair of vertices $x,y$, there exists a sequence of vertices $x=w_1, \ldots w_k =y $ that maximizes the expression defining $f(x,y)$ that is also a path of relevant edges.
\end{claim}
\begin{proof}
For fixed $x,y$, suppose that $x = w_1, \ldots, w_k = y$ is an extremal sequence with $f(x,y) = \min_{1 \leq i \leq k-1} (w_i , w_{i+1})_r$.
Suppose there is an $i$ such that $w_iw_{i+1}$ is not a relevant edge, which implies the existence of a vertex $u \notin \{w_i, w_{i+1}\}$ on a shortest path from $w_i$ to $w_{i+1}$, which implies $d(w_i,u) + d(u,w_{i+1}) = d(w_i,w_{i+1})$.
By the triangle inequality, we have $d(r,u) \geq d(r,w_{i+1}) - d(w_{i+1},u)$, so 
\begin{eqnarray*}
 (w_i, u)_r 	& = & 	\frac{d(w_i,r) + d(u,r) - d(w_i,u)}2 \\
				& \geq & \frac{d(w_i,r) + (d(r,w_{i+1}) - d(w_{i+1},u)) - d(w_i,u)}2 \\
				& = &  \frac{d(w_i,r) + (d(r,w_{i+1}) - (d(w_i,w_{i+1})- d(w_i,u))) - d(w_i,u)}2 \\
				& = &  \frac{d(w_i,r) + d(r,w_{i+1}) - d(w_i,w_{i+1})}2 = (w_i, w_{i+1})_r. \\
\end{eqnarray*}
A symmetric argument gives that $(u,w_{i+1})_r \geq (w_i,w_{i+1})_r$.
Thus the sequence 
$$x = w_1, \ldots, w_i, u, w_{i+1}, \ldots , w_k = y$$
 is another extremal sequence.
Repeat this procedure until $w_1, \ldots, w_k$ forms a path of relevant edges.
\end{proof}

Let $(x , y)_r' = (d'(x, r) + d'(y, r) - d'(x, y))/2$.
That is, when Gromov applies Theorem \ref{zero curvature tree}, it is with graph $G'$ and distance function $d'$; and $(x,y)_r'$ denotes where the vertex $w_{u,v}$ is constructed.
The following is a direct calculation.

\begin{claim} \label{identifying vertices}
The following are true.
\begin{enumerate}
	\item $(x , y)_r' = f(x, y)$.
	\item $u \in T[r,v]$ if and only if there exists a path $u = w_1, \dots w_k = v$ of relevant edges such that $(w_i,w_{i+1})_r \geq d(r,u)$ for each $i$. 
	\item If $u \in P_{v,r}$, then $(u,v)_r' = d(u,r)$.
\end{enumerate}
\end{claim}

We give the following definition for a generalized merged BFS:
\begin{itemize}
	\item Construct a function $g:E\rightarrow \mathbb{R}$ where $g(u,v) \leq \min\{d(r,u), d(r,v)\}$.
	\item Construct a BFS tree $T'$.
	\item For every relevant edge $uv$ with $d(r,v) \geq d(r,u)$ add a vertex $w_{u,v}$ in $T[r,u]$ by subdividing an edge such that $d(r,w_{u,v}) = g(u,v)$.
	\item Merge paths $T[r,v]$ and $T[r,w_{u,v}]$.
\end{itemize}
To recreate Definition \ref{defn merged BFS}, set $g(u,v) = \min\{d(r,u), d(r,v)\}$.
To recreate Gromov's algorithm, set $g(u,v) = (u,v)_r$.
But these two things are not very different: if $d(u,r) = \min\{d(r,u), d(r,v)\}$, then $d(u,r) \geq (u,v)_r \geq d(u,r) - W/2$.

\begin{theorem}\label{gromov is layering tree them}
For a given graph $G$, let $T$ be the tree constructed by merged BFS and let $T_*$ be the tree constructed by Gromov's algorithm.
For all $u,v$ we have $d_T(u,v) \leq d_{T^*}(u,v) \leq d_T(u,v) + W$.
\end{theorem}
\begin{proof}
Suppose $u',u'' \in T[r,u]$ such that $d(r,u') \leq d(r,u'')$.
Let $G'$ be constructed from $G$ by merging $T[r,v]$ with $T[r,u']$, and let $G''$ be constructed by merging $T[r,v]$ with $T[r,u'']$.
By the definition of merging paths, for all $u,v$ we have $d_{G''}(u,v) \leq d_{G'}(u,v)$.
Applying this fact inductively on $E(G) \setminus E(T')$, we have that $d_T(u,v) \leq d_{T^*}(u,v)$ for all $u,v$.
We now must prove that for arbitrary $u,v$ we have $d_{T^*}(u,v) \leq d_T(u,v) + W$.

In what follows, we use function $f$ from Definition \ref{defn f}.

If $u \in T[r,v]$, then there exists a path of relevant edges in $G$ such that every vertex $z$ in the path satisfies $d(r,z) \geq d(u,r)$.
The same path establishes that $f(u,v) \geq d(u,r)-W/2$.
Therefore, by the construction given in Theorem \ref{zero curvature tree}, in $T^*$ the vertex $w_{u,v}$ satisfied $d(u,w_{u,v}) \leq W/2$ and so 
$$d_{T^*}(u,v) \leq d_{T^*}(v,w_{u,v}) +  d_{T^*}(w_{u,v},u) \leq (d_T(v,u) + W/2) + W/2. $$

Otherwise we may assume that there exists a unique vertex $x$ of degree $3$ in $T[r,u] \cup T[r,v]$.
Because $T$ is a tree, $d_T(u,v) = d_T(u,x) + d_T(x,v)$.
Recall that there exists a path of relevant edges in $G$ such that every vertex $z$ in the path satisfies $d_G(z,r) \geq d_T(r,x)$.
The same path establishes that $f(u,v) \geq d_T(x,r)-W/2$.
Therefore, by the construction given in Theorem \ref{zero curvature tree}, in $T^*$ the vertex $w_{u,v}$ satisfied $d_{T^*}(r,w_{u,v}) \geq d_T(x,r) -  W/2$ and so 
$$d_{T^*}(u,v) \leq d_{T^*}(v,w_{u,v}) +  d_{T^*}(w_{u,v},u) \leq (d_T(v,x) + W/2) + (d_T(u,x)+W/2) \leq d_T(u,v) + W.$$
\end{proof}

\begin{corollary}\label{connecting add distortion thm}
For any graph $G$ with merged BFS $T$ we have $\gamma_{\add}(G) \leq \|T - G\|_\infty \leq 6 \gamma_{\add}(G) + W$.
Moreover, $\frac{\Delta_G-2W}6 \leq \gamma_{\add}(G) \leq  \Delta_G + 2W$ .
\end{corollary}
\begin{proof}
By definition of $\gamma_{\add}$ we have $\gamma_{\add}(G) \leq \|T - G\|_\infty$.
Let $T_*$ be the tree constructed by Gromov's algorithm; Agarwala,  Bafna, Farach, Paterson, and Thorup \cite{ABFPT} proved that $\|T_* - G\|_\infty \leq \leq 6 \gamma_{\add}(G)$.
By Theorem \ref{gromov is layering tree them}, $\|T - G\|_\infty \leq \|T_* - G\|_\infty + W$, so $\|T - G\|_\infty \leq 6 \gamma_{\add}(G) + W$.

As previously stated for merged BFS $T$, we have 
$$ \Delta_G - W \leq  \|T - G\|_\infty \leq \Delta_G + 2W. $$
So the final inequality comes from combining the above inequalities to see that 
$$ \gamma_{\add}(G) \geq \frac{\|T - G\|_\infty - W}6 \geq \frac{\Delta_G  - 2W}6.$$
\end{proof}

\subsection{Multiplicative Distortion}\label{mult dist subsec}
A tree-like decomposition $L$ is a structure from \cite{BIS} and is similar to a merged BFS $T$.
The distinction is that the tree-like decomposition includes a parameter $\lambda \geq 1$ where distances $d(a,r)$ are rounded to the nearest multiple of $\lambda W$.
There is also a cosmetic change, where instead of merging one pair of paths at a time as in Definition \ref{defn merged BFS}, the authors of \cite{BIS} merge many paths at once by calculating components in a particular auxiliary graph.

Theorem \ref{connecting mult distortion thm} is a weaker result that is implied by B{\u a}doiu, Indyk, and Sidiropoulos \cite{BIS}.
For completeness sake, because of the slight differences between tree-like decompositions and merged BFS, we include a proof.
Let $W' = \frac{\max_{uv \in E}w(uv)}{\min_{uv \in E}w(uv)}$.

\begin{theorem}\label{connecting mult distortion thm}
Assume all edges are weighted at least $1$ and at most $W'$.
If $\gamma_{\mult}(G) \geq 12$, then $\gamma_{\mult}(G) \leq 6 \Delta_G$.
\end{theorem}
\begin{proof}
We construct a distance approximating tree $T$ as follows:
\begin{itemize}
	\item Create a copy $G'$ of $G$, where vertex $u$ in $G$ is copied to vertex $u'$ in $G'$.
	\item Create merged BFS $T$ over $G'$.
	\item For each $u \in V$, add $u$ to $T$ as a leaf by appending edge $u'u$ with weight $1$. Multiply all edge weights by $1/3$.  Call this tree $T^*$.
	\item Let $\gamma = \min_{u,v} d_G(u,v)/d_{T^*}(u,v)$.  Multiply all edge weights by $\gamma$.
\end{itemize}
Recall that for all vertices $d_T(u,v) \leq d_G(u,v)$, and therefore $d_{T^*}(u,v) \leq (d_G(u,v) + 2)/3 \leq d_G(u,v)$.
So $\gamma_{\mult}(G) \leq \gamma$, and the theorem will follow when we show that $\gamma \leq 6 \Delta_G$.

Let $u,v$ be such that $d_G(u,v) = \gamma d_{T^*}(u,v)$.
By definition of $T^*$ and the properties of a merged BFS we have that 
$$d_{T^*}(u,v) \geq \max\{2/3, \frac{1}{3}(d_G(u,v) - \Delta_G - 2W')\} \geq \max\{2/3, \frac{1}{3}(d_G(u,v) - 3\Delta_G)\}.$$
Thus $d_G(u,v) \geq 2 \gamma/3$ and $\Delta_G \geq (1/3 - \gamma^{-1})d_G(u,v) \geq d_G(u,v)/4$.
Therefore $\Delta_G \geq \gamma/6$.
\end{proof}

\begin{theorem}[\cite{BIS}, Lemma 2.1] \label{D square and mult}
Assume all edges are weighted at least $1$ and at most $W'$.
We have $\gamma_{\mult}(G) \geq D_\square / W'$.
\end{theorem}

Combining Theorem \ref{D square and mult} with our prior inequality between $\Delta_G$ and $D_\square$ establishes the following result.

\begin{corollary}\label{connecting mult distortion thm 2}
Assume all edges are weighted at least $1$ and at most $W'$.
We have $\gamma_{\mult}(G) \geq \frac{\Delta_G}{3W'} - 1.5$
\end{corollary}

Unfortunately, Corollary \ref{connecting mult distortion thm 2} is tight (which is why B{\u a}doiu, Indyk, and Sidiropoulos use follow-on procedures to modify their tree-like decomposition to create a bound that is independent of $W'$ but exponentially dependent on the maximum degree of the graph).
To see tightness, consider a graph $G$ with vertices $v_1, v_2, \ldots, v_{k+1}$, edges $v_1v_{k+1}$ and $v_iv_{i+1}$ for $1 \leq i \leq k$, and weights $w(v_iv_{i+1}) = 1$ and $w(v_1v_{k+1}) = k-\epsilon$ for $0 < \epsilon \ll k$.
By deleting edge $v_1v_{k+1}$ we construct a tree that proves $\gamma_{\mult}(G) \leq 1 + O(\epsilon/k)$, but the construction in Theorem \ref{connecting mult distortion thm} using a merged BFS rooted at $r = v_{\lfloor k/2 \rfloor}$ yields a tree with multiplicative distortion $\Omega(k)$.

\begin{remark}\label{multiply by a common factor}
The statements in this subsection assume that the edge weights are in the range $[1, W']$, which is accomplished by multiplying all edge weights by a common factor $W'/W$.
If we multiply every edge weight by a common factor, then $W$, $\Delta_G$, $\tl(G)$, $\gamma_{\add}(G)$, and $D_\square$ each grow proportional but $\gamma_{\mult}$ is invariant.  
Because $W$, $\Delta_G$, $\tl(G)$, $\gamma_{\add}(G)$, and $D_\square$ all grow by the same factor, that factor cancels out when we compare 
\begin{itemize}
	\item $\Delta_G$ to $\tl(G)$ or $\gamma_{\add}(G)$, or 
	\item $\Delta_G$ to $D_\square$.
\end{itemize}
The assumption $\gamma_{\mult}(G) \geq 12$ in Theorem \ref{connecting mult distortion thm} becomes $\gamma_{\mult}(G) \geq 12 W/W'$ prior to the adjustment of edge weights.
Because $W' \geq 1$, for this assumption to hold it is sufficient that $\gamma_{\mult}(G) \geq 12 W$ in the original graph without modification to edge weights.

Assume all edges are relevant.
Because we have already seen that $\tl(G) \leq \Delta_G \leq 3 \tl(G) + \frac{9W}2$ and $\gamma_{\add}(G) - 2W \leq \Delta_G \leq 6 \gamma_{\add}(G) + 2W$, it follows that 
\begin{itemize}
	\item  $\gamma_{\mult}(G) \leq 12W + \min\{18 \tl(G) + 15 W, 36 \gamma_{\add}(G) \} $      , and
	\item   $\max \left\{\frac{2\tl(G)-9W}6, \frac{2\gamma_{\add}(G) - 13W}6  \right\} \leq W\gamma_{\mult}(G)$    .
\end{itemize}
\end{remark}

\subsection{Buneman's Algorithms}


Buneman gave separate algorithms in \cite{B1} and \cite{B2} to construct a distance approximating tree of a finite $0$-hyperbolic metric space.
In both cases he proved that the resulting distance approximating tree $T$ satisfies $\|T - G\|_\infty = 0$.
He does not provide any bounds on how the algorithms perform when $\delta(G) > 0$.
Even worse, the algorithm from \cite{B2} is not well-defined in general.
It constructs an edge of a distance approximating tree $T$ with weight $\mu$ by finding a partition of the form $V = S_1 \cup S_2$ that satisfies an equation\footnote{We can compare this formula to the results in Section \ref{hyperbolicity subsec}: Gromov constructs an edge with weight $\mu$ that separates $V = S_1 \cup S_2$ with $r \in S_1$ when there is a spanning tree $T_2$ over $S_2$ such that $\mu = \min_{c'd \in E(T_2)} (c',d)_r - \max_{b \in S_1 c \in S_2} (b,c)_r$.} that is equivalent to $\mu  = \min_{a,b \in S_1, c,d \in S_2} (c,d)_a - (c,b)_a > 0$.
The algorithm finds $2n-1$ such partitions, and links the edges using a distance function over partitions.
Non-trivial partitions of this form do not always exist when $\delta(G) > 0$; for example all partitions of this form in an unweighted cycle satisfy $|S_1| \in \{1,n-1\}$.

The final step in unifying many of the leading algorithms to construct a distance approximating tree---that is, to show how a particularly clever technique has bubbled out of many different contexts independently, as described in the Introduction---is to show that the algorithm from \cite{B1} after small modifications is the same algorithm described by Gromov. 

Buneman's algorithm from \cite{B1} begins by finding a triple $r,x,y$ that maximizes $(x,y)_r$.
Buneman then constructs a tree $T_{r,x,y}$ by recursing on a graph with one fewer vertices by deleting $x,y$ and adding vertex $w_{x,y}$ where $d(u,w_{x,y}) = d(x,u) - (r,y)_x$ for each $u \in V \setminus \{x,y\}$\footnote{Buneman proves that if $\delta(G) = 0$, then by the choice of $x,y,r$ we have that $d(x,u) - (r,y)_x = d(y,u) - (r,x)_y$, and so $x$ is symmetric with $y$.} (Buneman works with metric spaces and so implicitly adds a weighted edge from $w_{x,y}$ to each other vertex, but it can be seen that it suffices to only add weighted edges between $w_{x,y}$ and $N(x) \cup N(y)$).
From $T_{r,x,y}$ we construct $T$ by attaching $x$ and $y$ as leaves adjacent to $w_{x,y}$ incident to edges with weight $(r,y)_x$ and $(r,x)_y$ respectively.

A connection between Buneman's algorithm and Gromov's algorithm can be made immediately.
If the vertex $r$ chosen by Buneman coincides with the arbitrarily chosen root $r$ in Gromov's algorithm, then by the choice of $r,x,y$, we have for $f$ from Definition \ref{defn f} that $f(x,y) = (x,y)_r$, which by arithmetic implies that $d(w_{x,y},x) = (r,y)_x$ and $d(w_{x,y},y) = (r,x)_y$.
Thus, $w_{x,y}$ from Gromov's construction in Theorem \ref{zero curvature tree} is exactly as described for Buneman's algorithm, and all other vertices added by Gromov's algorithm are at least as far from $x$ and $y$ as $w_{x,y}$.
Therefore Gromov constructs vertex $w_{x,y}$ and edges $w_{x,y}x, w_{x,y}y$ in the same manner as in \cite{B1}.

There are two key differences between the algorithms.
The first is that Gromov uses a single fixed root $r$ for the entirety of the algorithm, while the ``appropriate root'' may change when Buneman's algorithm recurses on a smaller graph.
The other difference is that the introduction of $w_{x,y}$ in $T_{r,x,y}$ may change the underlying metric space, and so the algorithm may behave differently when Buneman's algorithm recurses on a smaller graph.
We will show that both of these discrepancies can be extinguished without substantial modification to Buneman's approach, which proves that the two algorithms are the same in a fundamental sense (if not perfectly identical in a technical sense).

First, it can be easily seen that choosing the triple $r,x,y$ that maximizes $(x,y)_r$ is stronger than necessary for Buneman's proof of the $\delta(G) = 0$ case; the entire argument still holds if we assume $r$ is fixed and choose $x,y$ to maximize $(x,y)_r$.
This handles the first key difference.
We present the following algorithm as a slight modification of Buneman's algorithm; we are only changing the construction of $T_{r,x,y}$ in respect to the fact that $x$ and $y$ are not handled symmetrically in \cite{B1} when $\delta(G) > 0$.
For elegance of presentation, we also describe the construction of $T_{r,x,y}$ using the addition/deletion of edges in a graph rather than the modification of a finite metric space.
\begin{itemize}
	\item[] \textbf{Modified Buneman's Distance Approximating Tree.} Input: $(G, r)$ for $r \in V(G)$.  Output: a distance approximating tree $T$.
	\item Find pair $x,y \in V$ that maximizes $(x,y)_r$.  If there is a tie, choose the pair that maximizes $d(r,x) + d(r,y)$.
	\item If $r \in \{x,y\}$, then $(x,y)_r = 0$.  By choice of $x,y$ we have $(u,v)_r = 0$ for all $u,v \in V$.  Return the following tree as $T$: construct an edge from $r$ to each of $u \in V \setminus \{r\}$ with weight $w(ur) = d(u,r)$.  Observe $\|G-T\|_\infty = 0$. The algorithm is finished.
	\item Otherwise, construct $G_{x,y}$ as follows. 
	\begin{itemize}
		\item Remove irrelevant edges.
		\item Add vertex $w_{x,y}$ and an edge from $w_{x,y}$ to each vertex in $N(x) \cup N(y)$.
		\item For each $u \in N(x) \setminus N(y)$, set $w(u w_{x,y}) = w(ux) - (r,y)_x$. For each $v \in N(y) \setminus N(x)$, set $w(v w_{x,y}) = w(vy) - (r,x)_y$.
		\item For $t \in N(x) \cap N(y)$, set $w(t w_{x,y}) = \min\{w(tx) - (r,y)_x,  w(ty) - (r,x)_y\}$.
		\item Delete $x$, $y$, and all incident edges.
	\end{itemize}
	\item Recurse on $(G_{x,y},r)$ to produce distance approximating tree $T_{x,y}$.
	\item Construct $T$ from $T_{x,y}$ by adding vertices $x$ and $y$ and edges $w_{x,y}x$ and $w_{x,y}y$ with weights $(r,y)_x$ and $(r,x)_y$, respectively.	
\end{itemize}

First we will prove that this algorithm is well-defined with nonnegative edge weights even when $\delta(G) > 0$, and then we will prove the equivalence with Gromov's algorithm.

\begin{proposition}
For a fixed $r$ in $G$, if $x,y \in V$ maximize $(x,y)_r$ then for every $u \in N(x)$ we have $w(ux) \geq (r,y)_x$.
\end{proposition}
\begin{proof}
By arithmetic, we have that $(x,y)_r = d(r,x) - (r,y)_x$.
For any vertex $u$, by the triangle inequality we have that $(x,u)_r \geq d(r,x) - d(x,u)$.
By choice of $x,y$ we have $(x,u)_r \leq (x,y)_r$, and so $d(x,u) \geq (r,y)_x$.
Regardless of whether the edge is relevant or not, if $u \in N(x)$, then $w(ux) \geq d(x,u)$.
\end{proof}

\begin{theorem}
For any weighted graph, the distance approximating tree produced by Modified Buneman's Distance Approximating Tree is the same tree produced by Gromov's algorithm (as described in Section \ref{hyperbolicity subsec}) when run with the same root $r$.
\end{theorem}
\begin{proof}
Let $(G,r)$ be given, and let $x,y$ be chosen as in Modified Buneman's Distance Approximating Tree.
Let $T_B$ be the tree constructed by Modified Buneman's Distance Approximating Tree, and let $T_G$ be the tree constructed by Gromov's algorithm.
If $r \in \{x,y\}$, then it is clear that the two algorithms perform the same operations; so assume $r \notin \{x,y\}$.
As discussed above, both algorithms will create a vertex $w_{x,y}$, and in each of $T_B$ and $T_G$ the vertices $x$ and $y$ are leaves appended to $w_{x,y}$ with edges of weight $(r,y)_x$ and $(r,x)_y$, respectively.
Let $T_B' = T_B \setminus \{x,y\}$ and $T_G' = T_G \setminus \{x,y\}$; the theorem will follow when we prove that $T_B'$ is the same as $T_G'$.
Let $G_{x,y}$ be the graph that is recursed on by Modified Buneman's Distance Approximating Tree.
Observe that $G_{x,y}$ has one fewer vertices than $G$.
To prove that $T_B'$ is $T_G'$, by induction it is equivalent to prove that Gromov's algorithm run on $G_{x,y}$ will generate $T_G'$ (in other words, we must prove that our formulation of Modified Buneman's Distance Approximating Tree has extinguished the second key difference as described above).

As we are dealing with several graphs at once ($G$ and $G_{x,y}$ and others), we will use subscripts $d_H$ and $w_H$ to clarify that a distance or edge weight, respectively, is taken in the graph $H$.
We will also use subscripts to clarify the function $f$ from Definition \ref{defn f} and with Gromov's product: $(a,b)_{c,H} = \frac{d_H(a,c) + d_H(b,c) - d_H(a,b)}2$.

An important and subtle detail is that the introduction of $w_{x,y}$ \emph{will} affect the underling distance metric.
In particular, for any pair $u,v$ such that $P_{u,v}$ contains both $x$ and $y$, we have $d_G(u,v) > d_{G_{x,y}}(u,v)$.
What we will show is that somehow these changes are not relevant to the procedures of Gromov's algorithm.
Recall that Gromov's algorithm maps $G_{x,y}$ to $G'$ such that $\delta(G') = 0$ and then performs the operation described by Theorem \ref{zero curvature tree} on $G'$.
The operations in Theorem \ref{zero curvature tree} are only dependent on the values $d_{G'}(r,u)$ and $(u,v)_{r,G'}$ for $u,v \in V$.
By Lemma \ref{describing d'} we have $d_{G'}(r,u) = d_{G_{x,y}}(r,u)$ and by Claim \ref{identifying vertices} we have $(u,v)_{r,G'} = f_{G_{x,y}}(u,v)$. 
So, rather than proving that the underlying metric remained invariant under the addition of $w_{x,y}$ (which is not true), we will prove the weaker statement that $d_{G_{x,y}}(r,u) = d_G(r,u)$ and $f_{G_{x,y}}(u,v) = f_G(u,v)$ for all $u,v$.

First, let us observe that by construction, $d_{G}(u,v) \geq d_{G_{x,y}}(u,v)$ for all $u,v \in V \setminus \{x,y\}$.
The proof will proceed by establishing a series of claims, which are:
\begin{enumerate}
	\item $d_{G_{x,y}}(r,w_{x,y}) = (x,y)_{r,G}$.
	\item $d_{G_{x,y}}(r,u) = d_G(r,u)$ for all $u \in V \setminus \{x,y\}$.
	\item For all $u \in V \setminus \{x,y\}$ we have $(u,w_{x,y})_{r,G_{x,y}} = \max\{(u,x)_{r,G}, (u,y)_{r,G}\}$.
	\item $f_G(u,v) = f_{G_{x,y}}(u,v)$ for all $u,v \in V \setminus \{x,y\}$.
\end{enumerate}

To prove the first claim, let $r=u_1, u_2, \ldots, u_{k-1}, u_k = x$ be a shortest path from $r$ to $x$ in $G$.
If we consider the same path in $G_{x,y}$, only swapping $u_k$ from $x$ to $w_{x,y}$, we see that 
$$d_{G_{x,y}}(r,w_{x,y}) \leq d_G(r,x) - w_G(u_{k-1}x) + w_{G_{x,y}}(u_{k-1} w_{x,y}) \leq d_G(r,x) - (r,y)_{x,G} = (x,y)_{r,G}.$$
Let $r = v_1, v_2, \ldots, v_{k-1}, v_k = w_{x,y}$ be a shortest path from $r$ to $w_{x,y}$ in $G_{x,y}$.
By symmetry between $x$ and $y$, let us assume that $w_{G_{x,y}}(v_{k-1}w_{x,y}) = w_G(v_{k-1}x) - (r,y)_{x,G}$.
If we consider the same path in $G$, only swapping $v_k$ from $w_{x,y}$ to $x$, we see that 
\begin{eqnarray*}
(x,y)_{r,G} 	&=& 	d_G(r,x)- (r,y)_{x,G} \\
		&\geq& 	\left(d_{G_{x,y}}(r,w_{x,y}) - w_{G_{x,y}}(v_{k-1} w_{x,y}) + w_G(v_{k-1}x)\right)- (r,y)_{x,G}\\
		& =& d_{G_{x,y}}(r,w_{x,y}).
\end{eqnarray*}
This proves the first claim.

Our second claim is that $d_{G_{x,y}}(r,u) = d_G(r,u)$ for all $u \in V \setminus \{x,y\}$.
By the above observation, we need to prove that $d_G(r,u) \leq d_{G_{x,y}}(r,u)$.
By way of contradiction, assume that $d_G(r,u) < d_{G_{x,y}}(r,u)$.
Suppose that $r = v_1, v_2, \ldots, v_k = u$ is a shortest path in $G_{x,y}$.
We may assume that there exists an $i$ such that $v_i = w_{x,y}$, as otherwise this path exists in $G$ with the same weight and we are done.
By symmetry between $x$ and $y$, let us assume that $w_{G_{x,y}}(w_{x,y}v_{i+1}) = w_G(x v_{i+1}) - (r,y)_{x,G}$.
Because the algorithm deleted irrelevant edges before constructing $G_{x,y}$ we have $d_G(x, v_{i+1}) = w_{G_{x,y}}(w_{x,y}v_{i+1}) + (r,y)_{x,G}$.
Because the $v_j$ form a shortest path and $r = v_1$, we have 
$$(x,y)_{r,G} = d_{G_{x,y}}(r,w_{x,y}) = d_{G_{x,y}}(r,v_{i+1}) - w_{G_{x,y}}(w_{x,y}v_{i+1}) < d_G(r, v_{i+1}) - w_{G_{x,y}}(w_{x,y}v_{i+1}).$$
Therefore 
$$ (x,v_{i+1})_{r,G} = \frac{d_G(r,x) + d_G(v_{i+1},r) - d_G(x,v_{i+1})}2 > \frac{d_G(r,x) + (x,y)_{r,G} - (r,y)_{x,G}}2 = (x,y)_{r,G}. $$
But this contradicts the choice of $x,y$, and therefore the second claim is proven.

Our third claim is that for all $u \in V \setminus \{x,y\}$ we have $(u,w_{x,y})_{r,G_{x,y}} = \max\{(u,x)_{r,G}, (u,y)_{r,G}\}$.
By the first claim we have $d_G(x,r) = d_{G_{x,y}}(w_{x,y},r) + (r,y)_x$ and by the second claim we have $d_{G_{x,y}}(r,u) = d_G(r,u)$.
Therefore $(u,w_{x,y})_{r,G_{x,y}} - (u,x)_{r,G} = \frac{d_G(x,u) - d_{G_{x,y}}(x,u) - (r,y)_x}2$.
By construction of $G_{x,y}$ we have that $d_G(x,u) \geq d_{G_{x,y}}(w_{x,y},u) +  (r,y)_x$, and therefore $(u,w_{x,y})_{r,G_{x,y}} \geq (u,x)_{r,G}$.
By a symmetric argument we also have that $(u,w_{x,y})_{r,G_{x,y}} \geq (u,y)_{r,G}$.
Let $u = v_1, v_2, \ldots, v_{k-1}, v_k = w_{x,y}$ be a shortest path in $G_{x,y}$.
By symmetry between $x$ and $y$, suppose that $w_{G_{x,y}}(v_{k-1}w_{x,y}) = w_G(v_{k-1}x) - (r,y)_x$.
So $d_G(x,u) \leq d_{G_{x,y}}(x,u) + (r,y)_x$ and therefore $(u,w_{x,y})_{r,G_{x,y}} \leq (u,x)_{r,G}$.
This proves the third claim.

Our fourth claim is that $f_G(u,v) = f_{G_{x,y}}(u,v)$ for all $u,v \in V \setminus \{x,y\}$.
Let $u = t_1, t_2, \ldots, t_k = v$ be a path such that $f_{G}(u,v) = \min_{i}(t_i, t_{i+1})_{r,G}$.
If each instance of $x$ or $y$ in the path is replaced with $w_{x,y}$, then by the third claim we have that $f_{G}(u,v) \leq f_{G_{x,y}}(u,v)$.
Now suppose that $u = s_1, s_2, \ldots, s_k = v$ is a path such that $f_{G_{x,y}}(u,v) = \min_{i}(s_i, s_{i+1})_{r,G_{x,y}}$.
If there exists an $i$ such that $s_i = w_{x,y}$, then replace $w_{x,y}$ with vertices $s_i', s_i'' \in \{x,y\}$ such that $(s_{i-1},s_i')_{r,G} = (s_{i-1},w_{x,y})_{r,G_{x,y}}$ and $(s_{i+1},s_i'')_{r,G} = (s_{i+1},w_{x,y})_{r,G_{x,y}}$, which is possible by the third claim.
By choice of $x$ and $y$ we have that $(s_i',s_i'')_{r,G} \geq (s_{i+1},s_i'')_{r,G}, (s_{i+1},s_i'')_{r,G}$.
Therefore $f_{G}(u,v) \geq f_{G_{x,y}}(u,v)$.
This proves the fourth claim.

With the second and fourth claim, we have proven that $T_B'$ is the same tree as $T_G \setminus \{x,y\}$, with the possible exception of the placement of $w_{x,y}$ (which will then affect how $x$ and $y$ are attached to $T_B'$).
This last detail is resolved by showing that $f_{G_{x,y}}(u,w_{x,y}) = \max\{f_G(u,x), f_G(u,y)\}$, which proves that merging the path $T[r,u]$ with $T[r, w_{w_{x,y},u}]$ as performed by Gromov's algorithm on $G_{x,y}$ is the procedure as merging the path $T[r,u]$ with $T[r,w_{x,u}]$ and $T[r,w_{y,u}]$ as performed by Gromov's algorithm on $G$---and therefore $w_{x,y}$ is placed identically in both algorithms.
The proof to this detail follows a similar argument to the third and fourth claim.
\end{proof}

\section{Other Forms of Negative Curvature}\label{curve sec}

In this section, we consider several conjectures that relate different measures of graph curvature to congestion.
These conjectures were made under the context of unweighted graphs.
To simplify the presentation, if the edge weights are not described, then assume the graph is unweighted.

A version of this manuscript \cite{Y2} appeared on the internet half a decade ago, and it has underwent several changes since then.
The online version contains several original results not presented here and may be of interest.
For example, planar networks are considered in \cite{JLBB} and ``approximately planar'' in \cite{JLA} where the Euclidean grid can be held as the canonical uncongested graph, but the emphasis on the Euclidean grid as the extremal planar graph is based only on intuition.
In \cite{Y2} we confirm this intuition asymptotically by showing that for any planar graph on $n^2$ vertices the value $\max_{u \in V} \mathbb{D}(\{u\})$ is at least $\frac{8}{81\sqrt{2}}$ times the corresponding value for a square Euclidean grid of the same order.

\subsection{Symmetry and Embedding in a Surface}
Jonckheere, Lou, Bonahon, and Baryshnikov (\cite{JLBB}, Section $3$) state two conjectures, each with two parts.
We will answer all four questions.
The conjectures deal with two different concepts of graph curvature, and the conjectures relate them towards the amount of congestion in a network and the location of the highest amounts of demand in a network.
At first we will deal with the questions relating large demand to symmetry.
Afterwards we will consider the questions relating large demand to the second moment.

\begin{defn}
A graph automorphism is a permutation $f:V(G) \rightarrow V(G)$ such that $uv \in E(G)$ if and only if $f(u)f(v) \in E(G)$.
The graph is vertex-transitive if for every pair of vertices $u, v$, there exists a graph automorphism $f$ such that $f(u) = v$.
The symmetric group of a graph $G$ is the set of graph automorphism on $G$.
The symmetric group fixes a vertex $u$ if each automorphism $f$ satisfies $f(u) = u$.
\end{defn}

\begin{defn}
The \emph{inertia} of a vertex $u$ in a graph $G$ is $\sum_{w \in V} d(u,w)^2$, which is the mass for the rotational inertia if the graph was to be ``rotated'' about $u$.
This is also the second moment about $u$ (in contrast, the center of the core found in Theorems \ref{SODA full thm} and \ref{full congestion} is a vertex that minimizes the first moment).  
\end{defn}

Although we do not consider it here, the definition of inertia extends to edge weighted graphs without modification.
To the best of our knowledge, there is no ``standard'' definition of a graph automorphism in the presence of edge weights.
If the generalization to an edge weighted graph was handled in a certain way, then the proofs to the following statements would still hold.
But such an extension is perhaps a bit contrived, and unnecessary to resolve the following conjectures.

\begin{conjecture}[\cite{JLBB}, 3.4.2]
Let $G$ be a large but finite graph with positive curvature.
If the graph is vertex-transitive, then both the demand and the inertia are constant for all vertices.
\end{conjecture}

\begin{conjecture}[\cite{JLBB}, 3.3.2]\label{fixed point is not necessarily the center}
Let $G$ be a large but finite graph with negative curvature.
If the graph has a symmetric group that fixes a unique point $v_0$, then $v_0$ is the unique point of minimum inertia and maximum demand.
\end{conjecture}

We have not yet defined ``curvature'' in the sense of those conjectures, but this is not necessary.
We can prove a much stronger statement below - both by avoiding the use of curvature and generalizing the assumption about vertex-transitivity.
We say that two vertices $u,w$ are equivalent if there exists a graph automorphism $f$ such that $u = f(w)$.
The equivalency classes of this equivalence relation are known as the \emph{orbits} of the graph.
If the graph is vertex transitive, then it has one orbit, which is the entire graph---hence results about orbits is a more precise statement than a statement about vertex transitivity.

\begin{theorem} \label{graph isomorphism}
If $u,w$ are in the same orbit, then $u$ and $w$ have the same demand and inertia.
However, the smallest orbit does not correspond to the vertices with the largest demand or smallest inertia.
\end{theorem}

\begin{proof}
If $w, v_1, v_2, \ldots, v_k$ is a path in $G$, then $f(w), f(v_1), f(v_2), \ldots, f(v_k)$ is a path in $f(G)$.
Furthermore, if $T$ is a shortest-paths minimum spanning tree rooted at $w$, then $f(T)$ is a tree in $f(G)$ whose total weight is equal to $T$.
Because $G$ is finite, we also know that $f(T)$ spans $f(G)$.
Therefore the inertia for vertex $u = f(w)$ is no greater than the inertia on $w$.

On the other hand, if $v_1, \ldots v_k, w, v_1', \ldots, v_{k'}'$ is a shortest path that crosses $w$, then $f(v_1), \ldots f(v_k), f(w), f(v_1'), \ldots, f(v_{k'}')$ is a shortest path that crosses $u = f(w)$.
Therefore the demand at vertex $w$ is no more than the demand at $u = f(w)$.

If $f$ is a graph isomorphism, then so is $f^{-1}$.
Hence both of the above two inequalities are equalities.
This proves the first part of the theorem.

Now consider $H$, a finite square subgraph of the Euclidean grid.
In any graph automorphism $f$, the degree of each vertex $u$ must equal the degree of $f(u)$.
So the automorphisms of $H$ can entirely be determined by the action on the four corner vertices with degree $2$; this proves $H$ has ${4 \choose 2} = 6$ automorphisms.
If we assume that $|V(H)| = (2k)^2$, then no vertex is fixed by any of the non-trivial automorphisms.
Let $H'$ be $H$ with a vertex $u$ added, where $u$ is in $2$ edges connecting $u$ to the middle right-most vertices of $H$.

Observe that $H'$ now has only two automorphisms, and $u$ is the only vertex fixed by the non-trivial automorphism.
All other vertices are in orbits of size $2$.
It should be clear that $u$ does not minimize inertia.
Also, $\mathbb{D}(u) \leq O(k^2)$, as $u \in P_{x,y}$ only if $u \in \{x,y\}$.

On the other hand $u$ does not have the most demand among vertices in $G$, as the vertices near the center of $H$ have demand $\Omega(k^3)$.
This is proven in Theorem 8.1 of \cite{Y2}.  
A sketch of the proof is that there are $\Theta(k^2)$ vertices above and to the left of the center and an equal number below and to the right.  
So there are $\Theta(k^4)$ pairs of vertices $u,v$ where there exists a shortest path $P_{u,v}$ that crosses the center.
The proof concludes with a counting argument showing that on average a pair $u,v$ among this set of $\Theta(k^4)$ pairs contributes at least $\Omega(k^{-1})$ demand to the selected vertex in the center of the Euclidean grid.

To full refute Conjecture \ref{fixed point is not necessarily the center} we require that $H'$ have negative curvature.
For this we observe that the intuition of the above construction---begin with a graph with multiple symmetries and then append a vertex to its `outside' in a manner that it becomes the only fixed point---can be done while remaining aware of curvature.
For example, we can replace the Euclidean grid $H$ with a hyperbolic tiling of the plane ($H'$ is still formed from $H$ by adding a single vertex).
Or $H$ can be the product of a Euclidean grid and a clique.
\end{proof}

Now let us turn to the next two questions.
The first two definitions require that the graph plus a set of closed disks called \emph{faces} form a CW-complex isomorphic to an orientable $2$-manifold.
Let $|f|$ denote the number of edges incident on the boundary of a given face, $f$.
Let $d_G: V(G) \times V(G) \rightarrow \mathbb{R}^+$ be the standard graph distance with weighted edges; we will assume all edges are relevant.
As the graph is embedded in a $2$-manifold, by considering an infinitesimally small loop around a vertex $v$ we can construct a circular ordering of the edges incident with $v$.
A circular ordering of the neighbors of $v$ is an ordering of $N(v)$ that corresponds to the ordering of the edges incident with $v$ based on the embedding of the graph.
The main definition of curvature in this section comes from Alexandrov Angles.

\begin{defn} [Alexandrov Angles] \label{angles}
For a given graph $G$ and vertex $v \in V(G)$, let $\{u_1, u_2, \ldots, u_k\}$ be the neighbors of $v$ in cyclic order.
The angle $u_ivu_{i+1}$ (where $i$ is taken modulo $k$) is defined to be 
$$ \alpha_i = cos^{-1} \left( \frac{ d(v, u_i)^2 + d(v, u_{i+1})^2 - d(u_i, u_{i+1})^2 }{2d(v, u_i)d(v, u_{i+1})} \right) .$$
Under these conditions, the curvature at a vertex $v$ is
$$ k_G(v) = \frac{2 \pi - \sum \alpha_i }{\sum area(v u_i u_{i+1})} ,$$
where the area of a triangle is defined using Heron's formula:
$$ area(abc) = \frac{1}{4} \sqrt{(a+b+c)(a+b-c)(b+c-a)(a+c-b)}.$$
\end{defn}

\begin{remark} \label{equal triangles}
If the graph is unweighted and $d(u_i, u_{i+1}) = 1$ for all $i$, then 
$$k_G(v) = \frac{4 \pi}{3^{1.5}} \left( \frac{6}{d(v)} - 1 \right).$$
So if every vertex is incident to $d$ edges and satisfies the above assumptions, then it has negative curvature if $d > 6$, positive curvature if $d < 6$, and zero curvature if $d = 6$.
\end{remark}

Note that every graph can be embedded in a $2$-manifold with sufficiently large genus.
If we are discussing a graph without any particular embedding associated to it, then we will use Remark \ref{equal triangles}

Definitions of graph curvature later in this paper will be based on Definition \ref{angles}, unless stated otherwise.
A second definition of curvature comes from the genus of the space a graph embeds into, and from Euler's formula.
This second definition was proven to not measure congestion well in \cite{NS}, but we include it as a comparison.

\begin{defn} [Gaussian Curvature] \label{graph genus}
For a given graph $G$, the curvature of a vertex $v$ is
$$ k_G(v) = 1 - 0.5 d(v) + \sum_{v \in f} |f|^{-1}. $$
The curvature of the whole graph $G$ is the sum of the curvatures of the individual vertices in $G$.
This equals the Euler Characteristic of the graph (which is the sum of the number of vertices plus the number of faces minus the number of edges), which equals $2$ minus twice the genus of the $2$-manifold the graph is embedded in.
\end{defn}

It is not immediately clear what the curvature of the whole graph is in Definition \ref{angles}.
Context from Jonckheere, Lou, Bonahon, and Baryshnikov (\cite{JLBB}, Section $4$) implies that the curvature for the graph is a fixed constant, and the curvature for each vertex should equal that constant value.
This creates a bit of a conflict with their desire to examine graphs with negative curvature that are both unweighted and embedded in the plane (as opposed to $2$-manifolds with higher genus).
The trouble is that if we assume that every pair of vertices induces at most one edge, then it is well known that the average degree of the vertices is strictly less than $6$.
If every face is a triangle and every edge has weight equal to $1$, then we may apply remark \ref{equal triangles} to see that the curvature of the graph must be positive!

Clearly, one too many assumptions and simplifications have been made here.
Perhaps we should simply reduce the assumption that $G$ is planar: after all, other authors have not made this assumption previously.
In this case, if $g$ is the genus of the surface that the graph embeds into, we have that the average degree is $6 + \frac{2(g-1)}{|V(G)|}$.

A second assumption we may want to discard is that the graph is finite.
Jonckheere, Lou, Bonahon, and Baryshnikov (\cite{JLBB}, Section $4$) construct a planar graph with negative curvature by making every face a triangle and every ``internal'' vertex have degree $d \geq 7$.
The problem is that they apply their iterative construction finitely many times, which leaves them with a great plenty of non-internal vertices with degree $3$ or $4$.
Furthermore, certain parameters in their conjectures---such as the center of inertia---are not well-defined on infinite graphs.

A third assumption we may wish to discard is that the curvature of the vertices is constant.
This assumption is never explicitly mentioned.
But there does not seem to be a natural method to determine what the curvature of the graph as a whole would be in this context, aside from using the discredited Definition \ref{graph genus}.

Therefore we will always consider below that our graph is infinite or non-planar or both.
We will attempt to cover the conjectures under all of these conditions, sometimes also considering Definition \ref{graph genus} for curvature.

\begin{conjecture} [\cite{JLBB}, 3.3.1(a)] 
Let $G$ be a large but finite graph with negative curvature.
There are very few vertices with the highest demand.
\end{conjecture}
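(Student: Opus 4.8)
The plan is to combine an exact identity for the \emph{total} demand with the fact that negatively curved graphs are extremely thin. For a connected graph $G$ on $n$ vertices and a fixed unordered pair $\{u,v\}$, every shortest $u$--$v$ path meets at most $d(u,v)+1$ vertices, so $\sum_w P(u,v;w)/P(u,v) \le d(u,v)+1$; summing over all pairs gives
$$\sum_{w \in V(G)} \mathbb{D}(w) \;\le\; \sum_{\{u,v\}}\bigl(d(u,v)+1\bigr) \;\le\; {n \choose 2}\bigl(\diam(G)+1\bigr).$$
Hence, writing $M = \max_w \mathbb{D}(w)$, for every fixed $\epsilon>0$ the set $\{w : \mathbb{D}(w) \ge \epsilon M\}$ has size at most ${n \choose 2}(\diam(G)+1)/(\epsilon M)$. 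It therefore suffices to prove two facts about a large, finite, negatively curved $G$: that $\diam(G) = O(\log n)$, and that $M = \Omega(n^2)$. Together these bound the number of near-maximal-demand vertices by $O(\epsilon^{-1}\log n) = o(n)$ -- ``very few''.

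First I would establish the diameter bound. Reading ``negative curvature'' in the sense made explicit by Remark \ref{equal triangles} -- an orientable $2$-manifold triangulation whose interior vertices satisfy a uniform negative upper bound $-\epsilon_0<0$ on their curvature, equivalently (when faces are triangles and edges have weight $1$) degree at least $7$ -- a combinatorial Gauss--Bonnet argument applied to the combinatorial balls $B_r(v)$ shows that the boundary sphere must be large enough to absorb the accumulated interior curvature, yielding $|S_{r+1}(v)| \ge (1+c)\,|B_r(v)|$ for a fixed $c=c(\epsilon_0)>0$ and hence $|B_r(v)| = \Omega\bigl((1+c)^r\bigr)$. Since $|B_r(v)|\le n$, every admissible radius is $O(\log n)$, so $\diam(G)=O(\log n)$. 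I would phrase this step using only a negative upper bound on the vertex curvatures, so that it is insensitive to the definitional ambiguities flagged in Section \ref{bad concept} and tolerant of the low-degree boundary vertices introduced by the finite-approximation construction of Section \ref{construction section}.

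For $M=\Omega(n^2)$ I would invoke the congestion machinery already developed. Negatively curved triangulations are $\delta$-hyperbolic with bounded $\delta$ (so have $\hat\delta$-thin triangles), and the two-sided growth estimate -- lower bound from Gauss--Bonnet, upper bound from the degree bound -- makes them $(a,b,c)$-balanced with $b=1+c$ and suitable constants $a,c$, the half-space requirement being exactly the balance such a triangulation enjoys (as for the probabilistic model of \cite{KPKVB}). Theorem \ref{mini Kahunah} then produces a ball of radius $O(\log c^{-1}+\delta)=O(1)$ whose demand sums to $\Omega(n^2)$; by the growth bound that ball contains only $O(1)$ vertices, so one of them alone carries demand $\Omega(n^2)$, giving $M=\Omega(n^2)$. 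Substituting into the first display completes the argument and in fact shows the near-maximal-demand vertices all lie inside a single ball of bounded radius.

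I expect the genuine obstacle to be the middle step: extracting exponential growth from ``negative curvature'' in a way that is simultaneously rigorous and robust -- choosing a hypothesis-light formulation (the paper's own discussion shows the notion is slippery) and carrying out the Gauss--Bonnet bookkeeping for balls rather than for the closed surface, while controlling the boundary vertices of the construction. Verifying the precise $(a,b,c)$-balance hypotheses of Theorem \ref{mini Kahunah} for the constructed graphs is a secondary point requiring care; the counting identity and the final combination are routine.
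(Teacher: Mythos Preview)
You are proving the conjecture; the paper is \emph{refuting} it. Immediately after stating Conjecture~3.3.1(a), the paper gives three counterexamples: an infinite hyperbolic tiling (vertex-transitive, hence uniform demand), the lexicographic product of a cycle and a clique (finite, vertex-transitive, negatively curved by both Definitions~\ref{graph genus} and~\ref{angles}), and a ``thick cycle'' obtained by growing the construction of Section~\ref{construction section} long and thin and then gluing the ends. In each case the demand is spread over a large set of vertices, so it is \emph{not} true that ``very few'' vertices attain the maximum.

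These counterexamples show exactly where your argument breaks. Your step $M=\Omega(n^2)$ relies on Theorem~\ref{mini Kahunah}, whose $(a,b,c)$-balance hypothesis is a genuine assumption, not a consequence of negative curvature: in a vertex-transitive graph Theorem~\ref{graph isomorphism} forces $\mathbb{D}(w)$ to be constant, so $M$ equals the \emph{average} demand $\le \tfrac12(n-1)(\diam(G)+1)$, never $\Omega(n^2)$. Your step $\diam(G)=O(\log n)$ fails too: the thick-cycle construction has every interior vertex of degree $\ge 7$ yet has diameter linear in $n$, because the combinatorial Gauss--Bonnet growth estimate you sketch is only valid while the balls $B_r(v)$ remain simply connected---once the ball wraps around the cycle the boundary-absorption inequality collapses. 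The obstacle you flagged in your last paragraph is not merely delicate; for these graphs it is false. The correct response to this conjecture is a counterexample, not a proof.
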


\begin{counterex}\label{transitive curve}
We argue that a graph can be vertex transitive and have negative curvature.
By Theorem \ref{graph isomorphism}, such a graph would have uniform demand.
\end{counterex}

\begin{proof}
If we drop the assumption that our graph is finite, then our counterexample is an infinite hyperbolic tiling of the two-dimensional disk.
The second option is that our graph is not planar, and our second counterexample is a lexicographic product of a cycle and a clique.
This graph has vertex set $\{u_{i,j} : 1 \leq i \leq k, 1 \leq j \leq k'\}$, with edge set $\{u_{i,j} u_{i',j'} : (i = i')\ or\ (|i-i'|=1\ and\ j=j')\}$.
It satisfies both Definition \ref{graph genus} and \ref{angles} of negative curvature.
\end{proof}

\begin{conjecture} [\cite{JLBB}, 3.3.1(b)] 
Let $G$ be a large but finite graph with negative curvature.
The vertices with the highest demand are near the vertices with the smallest inertia.
\end{conjecture}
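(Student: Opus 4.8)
The plan is to \emph{refute} this conjecture: I will build a family of negatively curved graphs $G$ in which the vertex of maximum demand and the vertex of minimum second moment of inertia are at distance $\to\infty$. The governing intuition is that demand is blind to a thin appendage that holds only a few vertices, whereas the second moment of inertia weights distances \emph{quadratically} and is therefore dominated by a small amount of mass placed very far away; so a long thin negatively curved ``tentacle'' glued to the rim of a negatively curved core drags the inertia--minimizer far out along the tentacle while the demand--maximizer stays put in the core.

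Concretely, I would take $C$ to be a large finite patch of the degree-$d$ triangulation of the hyperbolic plane for a fixed $d\ge 7$, built by the iterative procedure of Section~\ref{construction section}: then $C$ has $n_C$ vertices, diameter $\Theta(\log n_C)$, every interior vertex has degree $d$ (hence negative curvature by Remark~\ref{equal triangles}), and for a suitable $b>1$, $c>0$ depending only on $d$, $C$ is $\delta$-hyperbolic with $\delta=O(1)$, has $O(1)$-thin triangles, and is $(O(1),b,c)$-balanced; by Theorem~\ref{mini Kahunah} some vertex $c_0\in C$ has $\mathbb{D}(c_0)=\Omega(n_C^2)$. At a boundary vertex $p$ of $C$ I would attach a ``tentacle'' $L$ --- a bounded-width negatively curved band of the same triangulation --- of combinatorial length $\ell:=\lfloor n_C^{2/3}\rfloor$, so that $|L|=\Theta(\ell)$, interior vertices of $L$ have degree $d$, and a vertex of $L$ at depth $j$ is at distance $\Theta(j)$ from $p$. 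Set $G=C\cup L$; since any $G$-path through $L$ between two vertices of $C$ visits $p$ twice, $C$ is isometric in $G$ and $\mathbb{D}_G(c_0)\ge\mathbb{D}_C(c_0)=\Omega(n_C^2)$.

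Two estimates finish the example. \textbf{Demand:} every vertex $q\in L\cup\{p\}$ separates at most $|L|=\Theta(n_C^{2/3})$ vertices from the rest of $G$, and $p$ additionally has small betweenness inside $C$ because it is a boundary vertex, so $\mathbb{D}_G(q)=O(n_C^{2/3}\cdot n_C)=o(n_C^2)$; hence every vertex whose demand is within a constant factor of the maximum lies in $C\setminus\{p\}$, i.e.\ at tentacle-depth $0$. \textbf{Inertia:} for the vertex $q_{j_0}$ at depth $j_0$ in $L$, summing $d(q_{j_0},v)^2$ over $v\in L$ and over $v\in C$ (with $d(q_{j_0},v)=j_0+d(p,v)$ for $v\in C$) yields, up to lower-order terms, a strictly convex quadratic $A(j_0)+n_C j_0^2+2j_0S_1+S_2$ in $j_0$, where $A(j_0)=\sum_{j=0}^{\ell}(j-j_0)^2$, $S_1=\Theta(n_C\log n_C)$, $S_2=\Theta(n_C\log^2 n_C)$; its minimum is at $j_0^{\ast}=\Theta(\ell^2/n_C)=\Theta(n_C^{1/3})\to\infty$, and a short comparison shows this value beats the inertia of every vertex of $C$ as well. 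Thus the inertia--minimizer sits at tentacle-depth $\Theta(n_C^{1/3})$, at distance $\ge\Theta(n_C^{1/3})$ from every vertex of $C$ and in particular from the demand--maximizer; letting $n_C\to\infty$ disproves the conjecture. Attaching several tentacles of prescribed lengths at different boundary vertices shows the inertia--minimizer can be pulled arbitrarily far from the demand region in any direction, so this ``center'' carries no tomographic information --- consistent with the conclusion of this section. (On an infinite graph the conjecture is vacuous, the second moment being infinite at every vertex; if non-planarity is insisted upon, the same construction can be carried out inside a high-genus triangulation.)

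The main obstacle, and the only delicate point, is the global inertia estimate: one must check that the minimizer of $\sum_v d(w,v)^2$ over \emph{all} of $V(G)$ really is the claimed tentacle vertex, and in particular that no vertex buried deep in $C$ --- where the $\sum_{v\in L}d(w,v)^2$ term is as small as it can be --- does better. This comes down to verifying that the quadratic mass saved by sliding a distance $j_0^{\ast}$ into the tentacle, of order $\ell^2 j_0^{\ast}$, strictly exceeds the cost incurred, of order $n_C (j_0^{\ast})^2$ plus the $O(n_C\log^2 n_C)$ core contribution; the choice $\ell=n_C^{2/3}$ makes this hold with room to spare. Everything else --- the balancedness of $C$, the demand lower bound from Theorem~\ref{mini Kahunah}, and the curvature computation from Remark~\ref{equal triangles} --- is routine, subject to the usual caveat (Section~\ref{bad concept}) that such graphs are negatively curved only on their interior.
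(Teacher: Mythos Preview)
Your refutation is sound and the counterexample works, but it takes a different route from the paper's. The paper builds a negatively curved ``Y''-shaped graph---two short legs of length $k$ and one long leg of length $3k$, each of width $h$---and computes directly: the demand is maximized at the junction where the three legs meet (a separator argument gives demand $\approx 7hk^2$ there versus $\approx 6.25hk^2$ at the local maximum inside the long leg), while the second moment of inertia is minimized roughly $0.7k$ into the long leg, since the long leg carries more mass. The gap is $\Theta(k)$.

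Your construction is more asymmetric and leans on heavier machinery: a genuinely hyperbolic core $C$ (so that Theorem~\ref{mini Kahunah} certifies $\Omega(n_C^2)$ demand somewhere inside) plus a thin tentacle of length $n_C^{2/3}$ whose \emph{few but distant} vertices dominate the quadratic inertia sum and drag the minimizer $\Theta(n_C^{1/3})$ out along it. This is arguably a sharper illustration of the moral---inertia is governed by squared distance, demand by separators---since your appendage contains a vanishing fraction of the vertices yet still captures the inertia minimum. The paper's example, by contrast, needs no appeal to the balancedness hypotheses or to Theorem~\ref{mini Kahunah}; the demand and inertia are computed by hand from the separator structure of the Y, which is more elementary and avoids the two soft spots in your argument: verifying that a finite patch of the degree-$d$ triangulation is actually $(O(1),b,c)$-balanced in the sense of the definition (the half-space condition for near-diametral geodesics is not automatic from the iterative construction of Section~\ref{construction section}), and handling the attachment of a bounded-width band at a single boundary vertex without disturbing the degree-$d$ condition or the isometry of $C$ in $G$. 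Neither is fatal, but the paper's Y sidesteps both.
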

\begin{counterex}\label{Y graph}
We argue that a graph can have negative curvature while the vertices with largest demand are arbitrarily far from those with smallest inertia.
\end{counterex}

\begin{proof}
As inertia is not well-defined for infinite graphs, we only consider the case where we drop the assumption that our graph is planar.
As the statement of the conjecture formally 
Let $G$ be a Euclidean grid over vertex set $\{(x,y): 1 \leq x \leq 4k, -h \leq y \leq h\}$, where $1 \ll h \ll k$.
Let $Y$ be $G$, but with edges $(x,0)(x,1)$ deleted for $1 \leq x \leq k$.
Observe that $Y$ represents a thick `Y,' as the vertices $\{(x,y):x>k\}$ form the long leg and the vertices $\{(x,y):x\leq k,y\leq0\}$ and $\{(x,y):x\leq k,y>0\}$ form the two short legs.
It is a direct calculation to see that the demand is maximized at the vertex $(k+1,0)$ and the inertia is minimized at $(x,0)$ for $x = (1.7+o(1))k$.
Our graph is then the lexicographic product of $Y$ and a small clique: the structure from $Y$ will imply that the vertices maximizing demand and the vertices minimizing inertia are $\Omega(k)$ apart while the edges of the clique will imply that every vertex has large degree and therefore the graph has negative curvature.
\end{proof}

\begin{conjecture}[\cite{JLBB}, 3.4.1]
Let $G$ be a large but finite graph with positive curvature.
$G$ has more balanced values for demand and inertia than a graph with negative curvature.
\end{conjecture}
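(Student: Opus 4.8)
The plan is to refute Conjecture 3.4.1 by placing a negatively-curved graph whose demand and inertia are \emph{perfectly} balanced next to a positively-curved graph whose demand is unbalanced by an unbounded factor; together these two examples show that positive curvature carries no information about balance relative to negative curvature. As in the previous subsections, no constructive statement is intended.

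For the negatively-curved side I would reuse the lexicographic product $G_1 = C_k[K_{k'}]$ already introduced above as a counterexample to 3.3.1(a). It is vertex-transitive, so both $\mathbb{D}(w)$ and the second moment of inertia $\sum_v d(w,v)^2$ are constant over all $w \in V(G_1)$ --- the most balanced a graph can be --- and it has negative curvature under both Definition \ref{angles} and Definition \ref{graph genus}, as already noted.

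For the positively-curved side I would take $G_2$ to be the two-dimensional Euclidean grid on $n$ vertices with its unit squares as faces, equipped with the paper's standard metric $d(u,v) = \lceil d_G(u,v)/2 \rceil$. A short calculation shows that \emph{every} vertex of $G_2$ has positive Alexandrov curvature: any two consecutive neighbours of a vertex $v$ lie at graph distance $2$, hence at $d$-distance $1$, so every angle $\alpha_i$ equals $\cos^{-1}(1/2) = \pi/3$ and $2\pi - \sum_i \alpha_i > 0$; moreover the total curvature of Definition \ref{graph genus} equals the Euler characteristic of the underlying complex, which is that of a disk, namely $1$. So $G_2$ is a large but finite graph with positive curvature under every reading of that phrase the paper has entertained. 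Yet by Theorem \ref{Euclidean grid demand} its centre has demand $\Theta(n^{1.5})$, while a corner vertex has demand $O(n)$, so the ratio of largest to smallest demand is $\Theta(\sqrt n) \to \infty$, and the second moment of inertia also differs between centre and corner. Hence the negatively-curved $G_1$ is strictly more balanced than the positively-curved $G_2$, which is the opposite of what the conjecture asserts.

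The main obstacle is not the argument itself but the ambiguity already flagged in Section \ref{bad concept}: neither ``positive curvature'' nor ``balanced'' is pinned down in \cite{JLBB}, so the counterexample must survive every reasonable reading. I would address this by verifying that $G_2$ is positively curved vertexwise under Definition \ref{angles}, as a sum under Definition \ref{angles}, and as a sum under Definition \ref{graph genus} (all three hold by the computation above), and by observing that a CW-complex that is a disk --- a $2$-manifold with boundary --- is precisely the sort of object the paper's own construction in Section \ref{construction section} produces. For a sharper failure I would instead take $G_2$ to be the $\mathrm{Y}$-shaped subregion of the grid modelled on the hyperbolic example of Section \ref{demand section}: there the distance metric is exactly proportional to graph distance, so the computation of that section transfers verbatim to show that demand is maximised at the base, inertia is minimised roughly $0.7k$ away, and the two loci are separated by an unbounded distance --- all while $G_2$ remains a positively-curved disk. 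This matches the paper's overall conclusion that curvature, which for triangulated examples is merely a function of the degree sequence, reveals nothing about the demand distribution.
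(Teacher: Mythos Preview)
Your refutation is correct and follows the same two-part template as the paper: exhibit a negatively-curved graph that is perfectly balanced, then a positively-curved graph that is unbalanced. For the negative side you and the paper use the identical vertex-transitive example $C_k[K_{k'}]$. The difference is in the positive-curvature witness. You take the full Euclidean grid and appeal to Theorem~\ref{Euclidean grid demand} to get a $\Theta(\sqrt{n})$ ratio between centre and corner demand; the paper instead uses (i) the ringed tree of Figure~\ref{polarGrid}, which is degree-$5$-regular (hence positive under Definition~\ref{angles}), planar (hence positive under Definition~\ref{graph genus}), yet has a single vertex with $\Theta(n^2)$ demand, and (ii) an H-shaped subregion of the grid whose bottlenecks force extreme demand and inertia while the large square blocks keep Gromov hyperbolicity unbounded.

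Both choices kill the conjecture, but the paper's buy you more. The ringed tree gives the sharpest possible imbalance ($\Theta(n^2)$ versus $O(n)$) and, because it is simultaneously Gromov-hyperbolic, it doubles as evidence that the Alexandrov/genus notions are orthogonal to Definition~\ref{gromov}; your grid only gives a $\Theta(\sqrt{n})$ ratio and says nothing new about that disconnect. The H-shaped grid is chosen specifically so that it is positively curved under \emph{every} definition in the paper, including non-hyperbolicity in Gromov's sense; your secondary Y-shaped grid of width $h$ is $O(h)$-hyperbolic (an $m\times n$ grid with $m\le n$ has $\delta=m$), so it is Gromov-negatively-curved and does not achieve that ``all definitions'' robustness. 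None of this invalidates your argument, but it explains why the paper reaches for the more exotic examples.
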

\begin{counterex} \label{H graph}
Recall that we have already presented a family of graphs with negative curvature with uniform demand and inertia in Counterexample \ref{transitive curve}.
There are graphs with positive curvature with skewed distributions for demand and inertia.
\end{counterex}

\begin{proof}
We first consider an infinite graph.
While inertia and demand are not well-defined for an infinite graph, we can make the following intuitive argument about the demand.
The ringed tree (see \cite{F} for description and properties) is $5$-regular, so it has positive curvature according to Definition \ref{angles}.
Furthermore, it is planar, so it also has positive curvature according to Definition \ref{graph genus}.
But the ringed-tree has constant hyperbolicity, and therefore finite isometric subsets are congested via Theorem \ref{SODA full thm}, which is the most skewed form of demand possible.

Next we consider a finite non-planar graph.
A second example that has positive curvature by all definitions in this paper is a non-convex finite subset of the Euclidean grid with large convex subsets.
Specifically, consider a subset that represents in shape a thick `H,' similar to the thick `Y' from Counterexample \ref{Y graph}.
Each large convex subset of the graph violates any possible negative curvature property, while the small degrees force positive curvature by definitions \ref{angles} and \ref{graph genus}.
Furthermore, demand and inertia have extreme values at the small bottle necks.
\end{proof}

\subsection{Scaled Hyperbolicity}
Pestana, Rodr\'{i}gues, Sigarreta, and Villeta \cite{PRSV} have proven that a finite graph is $\delta$-hyperbolic for $\delta \leq n/4$.
An infinite graph is hyperbolic if it is $\delta$-hyperbolic for some finite $\delta$, so how do we characterize when a finite graph deserves the label ``hyperbolic?''
Answering this question is the motivation behind the definitions of scaled hyperbolicity.
As we investigate scaled hyperbolicity and possible values for various parameters, we will show that scaled hyperbolicity is inherently different than $\delta$-hyperbolicity.

For three vertices $a,b,c$, let $\vd(a,b,c) = \max\{d(a,b), d(b,c), d(a,c)\}$.
Let $P_{x,y}$ be a shortest path from $x$ to $y$, and let $I(a,b,c) = \sup_{P_{a,b}, P_{b,c}, P_{a,c}}\inf\{d(u,v) + d(v,w) + d(u,w) : u \in P_{a,b}, v \in P_{b,c}, w \in P_{a,c}\}$.
The function $I$ is related to the \emph{min-size} of the graph, which is known to be bounded above and below by the hyperbolicity (e.g., see \cite{ABCFLMSS}).
Rather than considering the min-size directly, scaled hyperbolicty considers the ratio of min-size to $\vd$.

We define the $R$-scaled hyperbolicity to be $H_R(G) = \sup_{\vd(a,b,c) > R} \frac{I(a,b,c)}{\vd(a,b,c)}$.
If $P_{a,b}$ is the shortest path of the three, then by choosing $u,v = b$ and $w = a$, we see that $H_R \leq 2$.
The primary definition of scaled hyperbolicity \cite{JLB} is $H_R$ above without the condition on taking the supremum over all shortest paths\footnote{They did not take into account that there may be many shortest paths between two vertices in a graph.  For example, between any three points $a,b,c$ in the Euclidean grid there exists shortest paths $P_{a,b}, P_{b,c}, P_{a,c}$ such that $P_{a,c} \cup P_{a,b} \cup P_{b,c}$ forms a tree.
Therefore if we do not include the condition that we want the supremum over all shortest paths between a given triple of vertices, we may call the canonical non-curved graph hyperbolic!}.
With this modified definition, a graph is then considered scaled hyperbolic if $H_R(G) < 3/2$ for an appropriately large $R$.
The constant $3/2$ is geometrically motivated, as Jonckheere, Lohsoonthorn, and Bonahon \cite{JLB} showed that Euclidean spaces and other flat networks satisfy $H_R = 3/2$, while negatively curved Riemannian manifolds satisfy $H_R < 3/2$.

In general, we require that $R>1$, because any three vertices $a,b,c$ that induce three edges satisfy $\frac{I(a,b,c)}{\vd(a,b,c)} = 2$.
Even if the graph has no triangles, a similar approach can be used with equal-distant points along the shortest cycle.
On the other hand, proofs connecting insize to hyperbolicity (see \cite{GH}) show that if $R > 16 (1 +  \delta(G))$, then $H_R(G) < 3/2$.
Thus, if $R$ is too large, then the motivation to consider scaled hyperbolicity instead of Gromov hyperbolicity dissapears.

We claim that for any fixed $R$ there exist infinitely many graphs $G$ that are $(2R/3)$-hyperbolic and $H_R(G) = 2$.
A $k$-subdivision of a graph $G$ is a graph $G^{1/k}$ such that each edge $uv$ of $G$ is replaced by a disjoint path $P_{u,v}$ that starts at $u$, ends at $v$, and has length $k$ (because we are working with unweighted graphs, this is different than the definition in Section \ref{hyperbolicity subsec}).
By the four points condition, it is easy to see that if $G$ is $\delta$-hyperbolic, then $G^{1/k}$ is $(\delta+2k)$-hyperbolic.
If $k>R/3$ and $G$ is not a tree, then every cycle in $G^{1/k}$ has length at least $R$, and so by our argument above $H_R(G^{3/R}) = 2$.
By $k$-subdividing such a $0$-hyperbolic graph with $k = \lceil R/3 \rceil$, we see that the claim is true.
Because there exists infinitely many non-tree $0$-hyperbolic graphs (they are characterized in \cite{H}), this class contains graphs that are unbounded in size.

So it must be that $R$ grows as a function of $G$.
Jonckheere, Lohsoonthorn, and Bonahon \cite{JLB} suggest that $R$ grows proportional to the diameter of the graph.
Consider the Euclidean grid on a vertex set $\{(i,j) : 0 \leq i \leq m_R, 0 \leq j \leq R/2\}$, where $m_R$ is any sequence that satisfies the desired growth rate of $R$.
Suppose $a = (a_1, a_2)$, $b = (b_1, b_2)$, and $c = (c_1, c_2)$ are three points in such a grid, and by symmetry assume that $a_1 \leq b_1 \leq c_1$.
There exists a point $(x,y) \in P_{a,c}$ such that $x = b_1$, and clearly $b \in P_{a,b} \cap P_{b,c}$.
Because $0 \leq b_2, y \leq R/2$, we see that $I(a,b,c) \leq R$ by choosing $u = b$, $v = b$, and $w = (x,y)$.
Therefore $H_R(G) \leq \frac{R}{R} < 3/2$, and the graph is scaled hyperbolic.

On the other hand, the Euclidean grid is typically held as the standard ``flat,'' or non-hyperbolic graph.
Euclidean grids do not ``enjoy such archetypical properties such as ... the confinement of quasi-geodesics in an identifiable neighborhood of the geodesic'' \cite{JLB}.
By an analysis similar to one carried out in \cite{Y2}, the maximum demand in such a graph is at most $C\frac{n^2}{m_R} \ll n^2$ for some universal constant $C$, and therefore the scaled hyperbolic family of graphs is not roughly congested.

The above examples and results also apply to alternate versions of scaled hyperbolicity, such as the scaled four points condition of Jonckheere, Lohsoonthorn, and Ariaei \cite{JLA}.

\end{document}